\documentclass[11pt]{article}
\usepackage[T1]{fontenc}
\usepackage[margin=1in]{geometry}
\usepackage{multicol}
\usepackage{amssymb}
\usepackage{amsmath}
\usepackage{accents}
\usepackage{mathtools}
\usepackage{mathrsfs}
\usepackage{bm}
\usepackage{complexity}
\usepackage{xcolor}
\usepackage{graphicx}
\usepackage[caption=false]{subfig}
\usepackage{floatrow}
\newfloatcommand{capbtabbox}{table}[][\FBwidth]
\usepackage{soul}
\usepackage[ruled, linesnumbered, noend]{algorithm2e}
\usepackage{csquotes}
\usepackage{tikz}
\usetikzlibrary{positioning}
\usetikzlibrary{calc}
\usetikzlibrary{patterns}
\usetikzlibrary{arrows.meta}
\usetikzlibrary{3d}
\usetikzlibrary{decorations.pathmorphing}
\pgfdeclarelayer{bg}    
\pgfsetlayers{bg,main}  

\usepackage{amsthm}
\usepackage{thmtools,thm-restate}
\newtheorem{mresult}{Main Result}
\newtheorem{theorem}{Theorem}[section]
\newtheorem*{claim}{Claim}
\newtheorem{corollary}[theorem]{Corollary}
\newtheorem{lemma}[theorem]{Lemma}
\newtheorem{proposition}[theorem]{Proposition}
\newtheorem{definition}[theorem]{Definition}

\newtheorem{remark}[theorem]{Remark}

\newtheorem{example}[theorem]{Example}

\usepackage[
backend=biber,
style=alphabetic,
giveninits=true
]{biblatex}
\DeclareNameAlias{default}{family-given}
\addbibresource{sources.bib}

\usepackage{subfiles} 


\newcommand{\univ}{\Sigma}
\newcommand{\glang}{\Lambda}

\newcommand{\irr}{\mathop{\textsf{\textup{irr}}}}
\newcommand{\overbar}[1]{\mkern 1.5mu\overline{\mkern-1.5mu#1\mkern-1.5mu}\mkern 1.5mu}
\newcommand{\greedyrep}{\ddot{\rho}}

\newcommand{\todo}[1]{\textcolor{red}{TODO: #1}}

\makeatletter
\providecommand{\bigsqcap}{%
	\mathop{%
		\mathpalette\@updown\bigsqcup
	}%
}
\newcommand*{\@updown}[2]{%
	\rotatebox[origin=c]{180}{$\m@th#1#2$}%
}
\makeatother

\makeatletter
\newcommand{\vast}{\bBigg@{4}}
\newcommand{\Vast}{\bBigg@{5}}
\makeatother

\DeclareMathOperator{\raisedrightarrow}{\raisebox{+0.15ex}{$\rightarrow$}}
\makeatletter
\newcommand{\rightarroweq}{\mathrel{\mathpalette\rightarroweq@\relax}}
\newcommand{\rightarroweq@}[2]{%
  \vtop{
      \sbox\z@{$\m@th#1\raisedrightarrow$}%
    \ialign{%
      \hfil##\hfil\cr
      \copy\z@\cr
      \noalign{\nointerlineskip\kern0.7\ht\z@}
      \makebox[\wd\z@][s]{$\m@th#1\relbar\hss\relbar$}\cr
      \noalign{\kern-0.7\ht\z@}
    }%
  }%
}
\makeatother

\DeclareMathOperator{\sraisedrightarrow}{\raisebox{+0.075ex}{$\scriptstyle\rightarrow$}}
\makeatletter
\newcommand{\srightarroweq}{\mathrel{\mathpalette\srightarroweq@\relax}}
\newcommand{\srightarroweq@}[2]{%
  \vtop{
      \sbox\z@{$\m@th#1\sraisedrightarrow$}%
    \ialign{%
      \hfil##\hfil\cr
      \copy\z@\cr
      \noalign{\nointerlineskip\kern0.7\ht\z@}
      \makebox[\wd\z@][s]{$\m@th#1\relbar\hss\relbar$}\cr
      \noalign{\kern-0.7\ht\z@}
    }%
  }%
}
\makeatother

\DeclareMathOperator{\sraisedleftarrow}{\raisebox{+0.075ex}{$\scriptstyle\leftarrow$}}
\makeatletter
\newcommand{\sleftarroweq}{\mathrel{\mathpalette\sleftarroweq@\relax}}
\newcommand{\sleftarroweq@}[2]{%
  \vtop{
      \sbox\z@{$\m@th#1\sraisedleftarrow$}%
    \ialign{%
      \hfil##\hfil\cr
      \copy\z@\cr
      \noalign{\nointerlineskip\kern0.7\ht\z@}
      \makebox[\wd\z@][s]{$\m@th#1\relbar\hss\relbar$}\cr
      \noalign{\kern-0.7\ht\z@}
    }%
  }%
}
\makeatother

\newcommand{\ideal}[1]{\mathop{\downarrow}#1}

\DeclareMathOperator*{\argmin}{arg\,min}

\newcommand{\rank}{r}

\providecommand{\L}{}
\renewcommand{\L}{\mathscr{L}}

\newcommand{\proposal}[1]{\ignorespaces}

\title{The Polymatroid Representation of a Greedoid, and Associated Galois Connections}
\author{Robert P. Streit\thanks{Department of Electrical and Computer Engineering at The University of Texas at Austin.} \and Vijay K. Garg\footnotemark[1]\textsuperscript{\;\:,}\thanks{Partially supported by NSF CNS-1812349, and the Cullen Trust for
Higher Education Endowed Professorship.}}
\date{\today}

\begin{document}

\maketitle

\begin{abstract}
    A \emph{greedoid} is a generalization of a \emph{matroid} allowing for more flexible analyses and modeling of combinatorial optimization problems.
    However, these structures decimate many matroid properties contributing to their pervasive nature.
    A \emph{polymatroid greedoid} \cite{korte1985polymatroid} presents an interesting middle ground,
    so we further develop this class.
    First we prove every \emph{local poset greedoid} for which the greedy algorithm correctly solves linear optimizations over its basic words must have a polymatroid representation.
    For this, we use relationships between the lattices of greedoid flats and closed sets of a polymatroid to generalize concepts in \cite{korte1985polymatroid}.
    Then, we show our generalization is defined by a Galois connection between the greedoid flats and closed sets of a representation.
    Finally, we apply this duality to identify a subclass of polymatroid greedoids with favorable properties, which we call \emph{strong polymatroid greedoids}.
    As technical tools for our analyses, we introduce \emph{optimism} and the \emph{Forking Lemma} for interval greedoids.
    Both are pervasive in our work, and are of independent interest.
\end{abstract}

\section{Introduction}

Simple algorithms are good, correct algorithms are better, and the harmony intersecting these two is best.
As a result, the greedy algorithm has inspired many axiomatic approaches towards studying when it ``works.'' 
One example is \emph{greedoids},  which are language generalizations of \emph{matroids}.
Specifically, the Edmonds-Rado Theorem \cite{edmonds1971matroids,rado1957note} says the greedy algorithm correctly optimizes any choice of linear function over a downward closed set family if and only if the family is a matroid.
Yet, the greedy algorithm is successful in many settings not captured by this statement.
Postulating the essence of this result lies in a matroid's \emph{exchange} property, Korte and Lovász propose greedoids in \cite{korte1984structural}.
But, such generalization gives up properties elevating matroids to their mathematical ubiquity, so greedoids do not enjoy as prominent a position in combinatorial optimization as matroids.
Still, greedoids show great potential.
Recent works give new applications to network reliability \cite{szeszler2021new} and generalized linear programs \cite{kempner2024greedoids}.
And more abstractly, there is opportunity to use greedoids to model complex structures in discrete programs, possibly to generalize matroids' role in submodular and discrete convex analysis.
But this requires more basic insights.
\emph{Polymatroid greedoids}, whose feasibility structure is encoded by polymatroids we call \emph{representations}, present a middle ground between greedoids and matroids.
So, we develop this class.

First, we generalize the approach from \cite{korte1985polymatroid} used to initially define polymatroid greedoids.
Though Korte and Lovász restrict their attention to integral polymatroid rank functions, we require more flexibility.
So, we introduce \emph{aligned representations} satisfying axioms we call \emph{alignment}, which asserts relationships between a greedoid's flats and a polymatroid's closed sets.
This meaningfully generalizes \cite{korte1985polymatroid} as 
Proposition~\ref{prop:int->aligned} proves integral representations are aligned.
Then, we examine linear optimization over greedoids, and show \emph{local poset greedoids} where the greedy algorithm is correct always have aligned representations.


\begin{mresult}\label{mr:lp-greedy=>pm-greedoid}
    Let $\glang$ be a greedoid with the local poset property,
    and suppose the greedy algorithm correctly solves any linear optimization over the basic words.
    Then, there exists an aligned polymatroid representation $\rho:2^\univ\to\mathbb{R}$ of $\glang$.
\end{mresult}

This advances our understanding of submodularity as a necessary condition for the greedy algorithm's correctness, as Theorem~\ref{thm:lp-greedy=>pm-greedoid} shows such local poset greedoids have structure conforming with a polymatroid.
Furthermore, our work and \cite{korte1985polymatroid} are evidence to representations being a powerful analytic tool, and by Theorem~\ref{thm:lp-greedy=>pm-greedoid} one need only verify the correctness of the greedy algorithm to apply them in analyzing of local poset greedoids.
Such is significantly easier a task than directly constructing an aligned representation, hence easing application to future work investigating discrete programs on greedoids with non-linear objectives.

Alignment is a minimal set of axioms ensuring that a polymatroid representation is ``useful.''
Our specificity is needed as a general continuous representation $\rho$ does fail to satisfy some properties, such as \emph{optimism} and the $\rho$-closure of the flat supports required for our second main result. 
Our second main result shows alignment in a representation $\rho$ is equivalent to the existence a special Galois connection between the lattices of flats $\L(\glang)$ and $\rho$-closed sets $\L(\rho)$, with adjoints determined by the kernel closure operator $\kappa$.
The result is technically surprising; for example, it is not obvious the image of $\kappa$ should even be in $\L(\rho)$.
Furthermore, this shows the flat support structure entirely determines aligned representations, exposing a new perspective on polymatroid greedoids and their representations.

\begin{mresult}\label{mr:gc}
    Let $\glang$ be a greedoid possessing optimism and the interval property, and $\rho:2^\univ\to\mathbb{R}$ a polymatroid representation.
    The representation $\rho$ is aligned if and only if the following are true:
    \begin{enumerate}
        \item The flat support sets are $\rho$-closed,
        \item The kernel closure operator $\kappa$ is cover-preserving,
        \item And, $\kappa:\L(\glang)\leftrightarrows\L(\rho):\kappa^{-1}$ is a Galois connection.
    \end{enumerate}
\end{mresult}

Finally, we identify a maximum aligned representation $\accentset{\vee}{\rho}$, called the \emph{greatest representation} and describe the subclass of polymatroid greedoids possessing it.
We call this subclass \emph{strong polymatroid greedoids}, and show optimistic interval greedoids with flat supports closed under intersection are exactly this class.
Our method uses the closure and interior operators of the Galois connection to show the $\accentset{\vee}{\rho}$-closed sets are isomorphic to the flats of any greedoids represented by $\accentset{\vee}{\rho}$.
Then, as the flat supports of an interval greedoid are also isomorphic to its flats, one can intuit that lattice of flat supports and $\accentset{\vee}{\rho}$-sets are actually equal (and so the meet of two flat supports is their intersection).
Similar techniques also show these greedoids have flats related via lattice-embedding to \emph{every} aligned representation's closed sets.
So strong polymatroid greedoids are quite well-behaved, since this makes the flats always isomorphic to a sublattice of the closed sets of any aligned representation.

\begin{mresult}\label{mr:strong-pm-greedoid}
    Let $\glang$ be an interval greedoid with optimism.
    The following are equivalent:
    \begin{enumerate}
        \item $\glang$ is a strong polymatroid greedoid,
        \item $\glang$ is a polymatroid greedoid such that for all aligned representations $\rho:2^\univ\to\mathbb{R}$, $\sigma_\rho\circ\kappa$ is a lattice-embedding of $\L(\glang)$ into $\L(\rho)$,
        \item The greatest representation $\accentset{\vee}{\rho}$ is a polymatroid representation of $\glang$,
        \item $\L(\glang)\cong\L(\accentset{\vee}{\rho})$, i.e. the $\accentset{\vee}{\rho}$-closed sets are isomorphic to the flats of $\glang$,
        \item The flat supports of $\glang$ form a Moore family.
    \end{enumerate}
\end{mresult}

We'll show that optimistic \emph{distributive supermatroids} (see \cite{tardos1990intersection}) are a nontrivial example of strong polymatroid greedoids.
See Figure~\ref{fig:tax}, where we display inclusions and intersections of major greedoid classes interacting with polymatroid greedoids and \emph{optimistic greedoids} (introduced in this work).
Any strict inclusions (or lack thereof) are verified in our appendices.

\begin{figure}[t!]
    \centering
    \scalebox{.65}{\begin{tikzpicture}[set/.style={very thick}]
\tikzstyle{break} = [align=center]
    \fill[gray!20] (2.5,2.5) -- (2.5,8.5) -- (9,8.5) -- (9,2.5);
    \draw (2.5,8.5) -- (9,8.5);
    
    \fill[gray!40] (2.5,7.5) rectangle (8,2.5);
    \fill[gray!40] (8,7) rectangle (8.5,2.5);
    \fill[gray!40,rounded corners=10pt] (2.5,7.5) rectangle (8.5,2.5);
    \draw[rounded corners=10pt] (2.5,7.5) -- (8.5,7.5) -- (8.5, 2.5);

    \draw[rounded corners=10pt] (-2.5, -.5) rectangle (14,10.5);

    \draw[rounded corners=10pt] (-2,0) rectangle (9.5,10);

    \draw[rounded corners=10pt] (2.5,.5) rectangle (9,9);

    \draw[rounded corners=10pt] (3,1.5) rectangle (8,5.5);

    \draw[rounded corners=10pt] (-1.5, 2.5) rectangle (13.5,9.5);

    \draw[rounded corners=10pt] (-1, 3) rectangle (13, 6.5);

    \draw[rounded corners=10pt] (-.5,3.5) rectangle (5,6);

    \draw[rounded corners=10pt] (3.5, 4) rectangle (7.5, 5);

    \draw (5.75, 1) node[break] {Local Poset Greedoid};
    \draw (11.5, 8) node[break] {\textbf{Optimistic}\\\textbf{Greedoid}};
    \draw (0.25, 1.25) node[break] {Interval Greedoid};
    \draw (5.5,2) node[break] {Distributive Supermatroid};
    \draw (5.75,8) node[break] {\textbf{Aligned Polymatroid Greedoid}};
    \draw (11.25,4.75) node[break] {Strong-Exchange\\Greedoid};
    \draw (1,4.75) node[break] {Antimatroid};
    \draw (6.25, 4.5) node[break] {Matroid};
    \draw (11.75, 1.125) node[break] {Greedoid};
    \draw (5.5,7) node[break] {\textbf{Strong Polymatroid Greedoid}};
\end{tikzpicture}}
    \caption{
        By Theorem~\ref{thm:lp-greedy=>pm-greedoid}, the intersection of strong-exchange greedoids (those for which the greedy algorithm correctly solves linear optimizations \cite{goetschel1986linear,korte1984greedoids}) and local poset greedoids are greedoids with aligned representations.
    This intersection includes matroids and antimatroids with the local poset property.
    These examples are included in the class of strong polymatroid greedoids as their flat supports are closed under intersection (see Section~\ref{subsec:lattice-embeddings}).
    By Theorem~\ref{thm:pm->optimism}, polymatroid greedoids with aligned representations are contained in the intersection of optimistic and local poset greedoids.
    However, one can adapt an example from \cite{korte1988intersection} to show this inclusion is strict (see Appendix~\ref{app:local-augmentation}).
    Moreover, Appendix~\ref{app:optimism} shows all strong-exchange greedoids are optimistic, and gives a trimmed matroid construction from \cite{korte1988intersection} for the existence of an optimistic interval greedoid without the local poset property.
    Finally, Section~\ref{subsec:lattice-embeddings} shows optimistic distributive supermatroids are strong polymatroid greedoids, and all other shown inclusions pre-date our work (see \cite{korte2012greedoids}).}
    \label{fig:tax}
\end{figure}

\subsection{Organization}
We first review the prior art and preliminaries in Sections~\ref{sec:related-work} and~\ref{sec:preliminaries} (respectively).
Section~\ref{sec:pm-greedoid} broadly examines polymatroid greedoids, and proposes new mathematical tools needed for our analysis.
Section~\ref{subsec:alignment} defines aligned representations, while 
Section~\ref{subsec:optimistic} introduces optimism, a useful weakening of the monotonicity in a matroid's span. Theorem~\ref{thm:pm->optimism} proves greedoids with aligned representations are optimistic.
Section~\ref{subsec:forking} then presents the Forking Lemma, stating for all pairs $F$ and $F'$ of flats continuations of the meet leading to a flat strictly below $F'$ are continuations of $F$ (and vice versa).
This result provides witness of whether \emph{specific} letters can be given from one feasible word to another, in contrast to \emph{exchange} which only guarantees existence.
An immediate application is Proposition~\ref{prop:strong-opt}, which shows that optimistic interval greedoids satisfy a stronger form of optimism.

In Section~\ref{sec:characterization} we prove Main Result~\ref{mr:lp-greedy=>pm-greedoid}.
Specifically, we construct an aligned representation using a constrained optimization over the flats.
The objective function is defined with the basis rank, which is always a matroid rank function in this context \cite{korte1984greedoids}.
This is key to verifying this construction gives a polymatroid.
Finally, in Section~\ref{sec:galois} we begin by proving Main Result~\ref{mr:gc}.
From there, we introduce the greatest representation and strong polymatroid greedoids in Section~\ref{subsec:lattice-embeddings}, and prove Main Result~\ref{mr:strong-pm-greedoid} using the properties of the Galois connection.

\section{Related Work}\label{sec:related-work}

After observing the success of the greedy algorithm in a variety of settings incompatible with downward closed set families, Korte and Lovász introduced greedoids as an analytic tool generalizing matroids \cite{korte1984structural,korte2012greedoids}.
Succeeding research developed a now well understood taxonomy of various classes and their properties \cite{korte1986non,korte1985note,schmidt1991greedoids,schmidt1988characterization,boyd1990algorithmic,korte1988intersection,goecke1988greedy,brylawski1988exchange}.
For example, greedoids include \emph{antimatroids} encoding the constraint structure of scheduling problems \cite{boyd1990algorithmic}, \emph{undirected branching greedoids} encoding Prim's invariant partial solutions are connected sets covering a root when computing minimum spanning trees \cite{korte1984structural,schmidt1988characterization}, and even \emph{blossom greedoids} describing Edmond's matching algorithm \cite{korte1984structural,korte1986non}.
See \cite{korte2012greedoids} for an introduction and review.
Beyond this, greedoids have a rich mathematical structure that has inspired novel results in abstract convexity \cite{ahrens1999convexity,edelman1985theory}, lattice theory \cite{faigle1980geometries,crapo1984selectors,czedli2023revisiting}, discrete processes \cite{bjorner1991chip,bjorner2008random}, and topology \cite{bjorner1985homotopy}.
However, the primary motivation of greedoids was initially rooted in mathematical optimization.
This is best seen in \cite{korte1984greedoids,goetschel1986linear}, where a necessary and sufficient condition called \emph{strong exchange} is identified as characterizing those greedoids for which the greedy algorithm correctly solves a linear optimization.
Strong exchange is a prototypical manifestation of a \emph{matroid embedding}, later shown to completely characterize set families in which the greedy algorithm correctly optimizes linear objective functions \cite{helman1993exact}.
In absence of strong exchange, there does exist a class of \emph{admissible} objective functions for which the greedy algorithm is always correct in any greedoid \cite{korte2012greedoids,szeszler2022sufficient},
and Faigle examines the greedy algorithm
in ordered geometries \cite{faigle1980geometries}.
More recently, Szeszl{\'e}r investigated the polyhedral aspects and applications of \emph{local forest greedoids} (a subclass of local poset greedoids) \cite{szeszler2021new} and proved new insights about the greedy algorithm \cite{szeszler2022sufficient}.
Lastly, \cite{kempner2024greedoids} made connections between greedoids and \emph{violator spaces} \cite{amenta1993helly,gartner2008violator} used in the study of generalized linear programming.

Polymatroid greedoids \cite{korte1985polymatroid} are the starting point for our work, and we are the first to investigate any relationships between this class and the greedy algorithm.
Prior work examining the greedy algorithm in greedoids is very generalized, whereas we specialize Theorem~\ref{thm:pm->optimism} to local poset greedoids.
This leads to a result we feel is easier to apply, and creates new insights within the greedoid taxonomy (see Figure~\ref{fig:tax}).
Furthermore, our work enhances lattice approaches to greedoid theory, which has been mostly limited to interval greedoids \cite{crapo1984selectors,korte2012greedoids,faigle1980geometries} and antimatroids \cite{edelman1985theory} so far.
And our work shows new applications of the lattice theory of polymatroids, an understudied area recently examined in \cite{gustafson2023polymatroids}.
Finally, similarly motivated to study more ``matroid-like'' greedoids, \cite{brylawski1988exchange} introduced \emph{exchange systems}.
However, their interest lied in identifying greedoids maintaining matroid transformations (like erection, series-connection, lifts, duality, etc.), while our interest in polymatroid greedoids lies in their description via the structure of a submodular function.
Submodularity is an unavoidable topic in the discrete analysis associated with combinatorial optimization and integer programming (see \cite{fujishige2005submodular,murota1998discrete} for example), and so we feel this direction shows promise to future applications in optimization.

\section{Preliminaries}\label{sec:preliminaries}

Our primary sources for greedoids are \cite{korte2012greedoids} and \cite{bjorner1992introduction}.
We assume familiarity with matroids and partially ordered sets.
References for the former and latter are \cite{white1986theory,welsh2010matroid,oxley2006matroid} and \cite{caspard2012finite,davey2002introduction}.
Treatments of lattice theory can be found in \cite{davey2002introduction,gratzer2011lattice}, and applications to computer science and algebraic combinatorics can be found in \cite{garg2015introduction} and \cite{stanley2011enumerative}, respectively.
Now, for any set $X$ we let $X!$ give its permutations.
And, let $X + y$ be the union of $X$ with a singleton $\{y\}$, i.e.\ $X+y\triangleq X \cup \{y\}$, while the difference is $X - y \triangleq X\setminus\{y\}$.

\subsection{Lattices}
A partially ordered set (poset) $\mathscr{P} = (\mathcal{X}, \sqsubset)$ is defined by an irreflexive, asymmetric, and transitive relation over a ground set $\mathcal{X}$.
In this case we say that $\mathbf{x}$ is \emph{less than} or \emph{lies below} $\mathbf{y}$ whenever $\mathbf{x} \sqsubset \mathbf{y}$, and conversely that $\mathbf{x}$ and $\mathbf{y}$ are \emph{incomparable} if all of $\mathbf{x} \not\sqsubset \mathbf{y}$, $\mathbf{y} \not\sqsubset \mathbf{x}$, and $\mathbf{x} \neq \mathbf{y}$ hold true.
We denote the reflexive closure by $\sqsubseteq$, and say that $X \subseteq \mathscr{P}$ is an (order) \emph{ideal} if $\mathbf{y} \in X$ and $\mathbf{x} \sqsubseteq \mathbf{y}$ implies $\mathbf{x} \in X$.
An (order) \emph{filter} is defined dually.
For $\mathbf{x},\mathbf{y}\in\mathscr{P}$ we refer to the \emph{meet} (greatest lower bound) by $\mathbf{x} \sqcap \mathbf{y}$ and the \emph{join} (least upper bound) by $\mathbf{x} \sqcup \mathbf{y}$, when they exist.
A poset is a \emph{lattice} whenever it is closed under meet and join, making $\L=(\mathcal{X},\sqcap,\sqcup)$ a well defined algebraic structure.
Correspondingly, a \emph{sublattice} is a substructure closed under meet and join.
The \emph{covering relation}, which we denote by `$\prec$' for all partial orders in this work, is such that $\mathbf{x} \prec \mathbf{y}$ if and only if $\mathbf{x} \sqsubset \mathbf{y}$ and there is no $\mathbf{z}$ with $\mathbf{x} \sqsubset \mathbf{z} \sqsubset \mathbf{y}$.
The covering relation allows one to visualize posets with \emph{Hasse diagrams}, which consist of nodes corresponding to the elements of the ground set $\mathcal{X}$ and line segments such that there is a line directed upwards from $\mathbf{x}$ to $\mathbf{y}$ if and only if $\mathbf{x} \prec \mathbf{y}$.
A poset $\mathscr{P}$ is \emph{graded} whenever there exists a function $g:\mathscr{P}\to\mathbb{Z}_+$ consistent with the covering relation in the sense that $\mathbf{x} \prec \mathbf{y}$ implies $g(\mathbf{x}) = g(\mathbf{y}) - 1$.
A lattice is \emph{semimodular} if and only if $\mathbf{x} \succ \mathbf{x} \sqcap \mathbf{y}$ implies $\mathbf{y} \prec \mathbf{x} \sqcup \mathbf{y}$.
An important property of such lattices is they are graded by \emph{submodular} functions, that is $f:\L\to\mathbb{R}$ satisfying,
\begin{equation}\label{eq:submodular}
    f(\mathbf{x} \sqcap \mathbf{y}) + f(\mathbf{x} \sqcup \mathbf{y}) \leq f(\mathbf{x}) + f(\mathbf{y}).
\end{equation}

For two posets $\mathscr{P} = (\mathcal{X},\sqsubseteq)$ and $\mathscr{Q} = (\mathcal{Y},\rightarroweq)$, $\varphi:\mathscr{P} \to \mathscr{Q}$ is an \emph{order-preserving mapping} if and only if $\mathbf{x} \sqsubseteq \mathbf{y}$ implies $\varphi(\mathbf{x}) \rightarroweq \varphi(\mathbf{y})$ for all $\mathbf{x},\mathbf{y}\in\mathscr{P}$.
A mapping between lattices is \emph{join-preserving} if $\varphi(\mathbf{x} \sqcup \mathbf{y}) = \varphi(\mathbf{x})\sqcup \varphi(\mathbf{y})$.
A \emph{meet-preserving} mapping is defined similarly.
When a mapping is both join and meet-preserving, then it is a \emph{lattice-homomorphism}.
An injective lattice-homomorphism is a \emph{lattice-embedding}.
Observe, the domain of a lattice-embedding is isomorphic to a sublattice of the codomain.
A Galois connection is a type of duality defined by two order-preserving mappings.
\begin{definition}[Galois Connection]
    Let $\mathscr{P} = (\mathcal{X},\sqsubseteq)$ and $\mathscr{Q} = (\mathcal{Y},\rightarroweq)$ be reflexive posets.
    A \emph{Galois connection} consists of two order-preserving mappings $\varphi_*:\mathscr{P}\to\mathscr{Q}$ and $\varphi^*:\mathscr{Q} \to \mathscr{P}$ where,
    \[
        \varphi_*(\mathbf{x}) \rightarroweq \mathbf{y} \iff \mathbf{x} \sqsubseteq \varphi^*(\mathbf{y}),\quad\forall (\mathbf{x},\mathbf{y})\in\mathscr{P}\times\mathscr{Q}.
    \]
\end{definition}
In this context, one calls $\varphi_*$ the \emph{lower adjoint} and $\varphi^*$ the \emph{upper adjoint}, and we specify the connection via $\mathscr{P}\rightleftarrows\mathscr{Q}$.
Adjoints uniquely determine each other in any connection.
For example,
\begin{equation}\label{eq:adjoints}
    \varphi^*(\mathbf{y}) = \bigsqcup\{\mathbf{x}\in\mathscr{P}\mid \varphi_*(\mathbf{x})\rightarroweq \mathbf{y}\},
\end{equation}
when $\mathscr{P}$ and $\mathscr{Q}$ are lattices.
A mapping between lattices is join-preserving if and only if it is the lower adjoint of some connection.
The dual statement also holds.

\begin{lemma}[\cite{davey2002introduction}]\label{lem:gc-join}
    Let $\mathscr{P}$ and $\mathscr{Q}$ be posets with a map $\varphi:\mathscr{P}\to\mathscr{Q}$.
    Suppose further that both $\mathscr{P}$ and $\mathscr{Q}$ are finite lattices.
    Then, we have that $\varphi$ is (meet/join)-preserving if and only if $\varphi$ is the (upper/lower) adjoint of some Galois connection $\mathscr{P}\rightleftarrows\mathscr{Q}$.
\end{lemma}

Finally, one refers to $\varphi^*\circ\varphi_*$ as the \emph{closure composition}, as it always satisfies the axioms for a closure operator on a poset.
This is, a closure operator $\sigma:\mathscr{P}\to\mathscr{P}$ on $\mathscr{P} = (\mathcal{X},\sqsubseteq)$ is \emph{extensive}, i.e. $\mathbf{x}\sqsubseteq\sigma(\mathbf{x})$, \emph{monotone}, i.e. $\mathbf{x}\sqsubseteq \mathbf{y}$ implies $\sigma(\mathbf{x})\sqsubseteq\sigma(\mathbf{y})$, and \emph{idempotent}, i.e. $\sigma(\sigma(\mathbf{x})) = \sigma(\mathbf{x})$.
Furthermore, $\varphi_*\circ\varphi^*$ is the \emph{interior composition} as it is an \emph{interior operator}, a mapping $\tau:\mathscr{P}\to\mathscr{P}$ which is monotone and idempotent but \emph{deflationary}, i.e. $\mathbf{x} \sqsupseteq \tau(\mathbf{x})$, instead of extensive.

\subsection{Polymatroids}
Introduced by Edmonds \cite{edmonds2003submodular}, a \emph{polymatroid} is a polytope associated with a special submodular function.
Specifically, a \emph{polymatroid rank function} $\rho:2^\univ\to\mathbb{R}$ is a submodular set function which is \emph{normalized}, i.e.\ $\rho(\varnothing) = 0$, and \emph{monotone} in that $X \subseteq Y$ implies $\rho(X) \leq \rho(Y)$.
Here our submodular function is defined on a powerset, so Eq.~\ref{eq:submodular} is equivalent to the \emph{law of diminishing returns},
\[
    X \subseteq Y \implies (\forall z \notin Y)\;(\rho/X)(z) \geq (\rho/Y)(z),
\]
where $\rho/X$ is the \emph{contraction} of $\rho$ by $X$ given by $(\rho/X)(Y)\triangleq \rho(X \cup Y) - \rho(X).$
Note we abuse notation by letting $(\rho/X)(y) = 0$ for all $y \in X$.
A polymatroid is \emph{integral} if the codomain of its rank function is the integers. 
The \emph{span} operator $\sigma_\rho:2^\univ\to 2^\univ$ is defined as,
\begin{equation}\label{eq:pm-span-def}\sigma_\rho(X) \triangleq\{y \in \univ\mid \rho(X + y) = \rho(X)\},\end{equation}
i.e.\ those elements for which the marginal return $(\rho/X)(y) = 0$. 
The span is a closure operator, and so those sets $X \subseteq \univ$ such that $X = \sigma_\rho(X)$ are called $\rho$-\emph{closed}.
Ordering the $\rho$-closed sets by containment makes a lattice (Figures~\ref{fig:ubg-rep} and~\ref{fig:greedy-rep} give Hasse diagrams of such lattices) which we call $\L(\rho)$ for rank function $\rho$.
Closure operators on a powerset form \emph{Moore families}, and so the $\rho$-closed sets are closed under intersection (i.e. $\sigma_\rho(X)\sqcap\sigma_\rho(Y) = \sigma_\rho(X)\cap\sigma_\rho(Y)$ for all $X,Y\subseteq\univ$).

\subsection{Greedoids}
Let $\univ$ be an alphabet, and $\univ^*$ the free monoid (i.e.\ all possible finite sequences over $\univ$. 
Every $\alpha \in \univ^*$ can be written as $\alpha = x_1\ldots x_k$, where each $x_i \in \univ$,
so one refers to the elements of
$\univ^*$ as \emph{words} and $\univ$ as \emph{letters}. A collection of words $\glang \subseteq
\univ^*$ forms a \emph{language} over $\univ$ which is \emph{simple}
whenever no letter appears
twice in any word of the language. 
The \emph{support} of
a word $\alpha\in\univ^*$ is,
$$\widetilde{\alpha}\triangleq\{x\in\univ\mid(\exists \beta,\gamma\in\univ^*)\;\beta x \gamma = \alpha\},$$
i.e. the collection of letters forming $\alpha$, and we refer to the length of a
word via $|\alpha|$.
The longest words of $\glang$ are called \emph{basic}, and 
we denote the identity element of
$\univ^*$ by $\epsilon$. Thus, 
$\alpha\epsilon = \epsilon\alpha = \alpha$ for all $\alpha \in \univ^*$, meaning $\epsilon$ holds the privileged position of the \emph{empty word}.
A \emph{greedoid} follows.
\begin{definition}[Greedoid]\label{def:greedoid}
    A \emph{greedoid} $\glang$ over an alphabet $\univ$ is a simple language satisfying:
    \begin{enumerate}
        \item \emph{(Non-Empty) }The greedoid possesses at least one feasible word, i.e. $\glang \neq \varnothing$,
        \item \emph{(Hereditary) }$\alpha\beta \in \Lambda$ implies the prefix $\alpha$ is feasible, i.e. $\alpha\in\Lambda$,
        \item \emph{(Exchange) }For all $\alpha, \beta \in \Lambda$, if $|\alpha| > |\beta|$ then there exists $x\in\widetilde{\alpha}$ such that $\beta x \in \Lambda$.
    \end{enumerate}
\end{definition}

Note that non-emptiness and the hereditary axioms together imply that $\epsilon \in \glang$, and that the exchange axiom makes all basic words equal in length.
There is an equivalent definition in terms of \emph{accessible} set families making it obvious that greedoids generalize matroids by removing the need for downward closure, see either of \cite{bjorner1992introduction} or \cite{korte2012greedoids} for example.
However, we find the language definition more intuitive, and better suited to our proof techniques.

Because a greedoid encodes a set of constraints on valid ways to build a solution in a stepwise fashion, we call the words of a greedoid \emph{feasible}.
Hence, the familiar reader sees that $\glang$ is a matroid if and only if $\alpha \in \glang$ implies that every permutation of $\alpha$ is also feasible, i.e. $\widetilde{\alpha}! \subseteq \glang$ (see \cite{brylawski1988exchange} for more on this perspective).
A motivating example is \emph{undirected branching greedoids}.

\begin{example}[Undirected Branching Greedoid]\label{ex:undirected-branching-greedoid}
    Let $(V, E)$ be a connected graph with undirected edges, and select a root vertex $s \in V$.
    Let the alphabet be given by $E$, and the feasible words be simple $x_1\ldots x_k \in E^*$ with the following:
    For all $i \leq k$, the edges $\{x_1, \ldots, x_i\}$ induce a connected acyclic subgraph containing $s$ and $x_i$ is incident to the subgraph induced by $\{x_1,\ldots,x_{i-1}\}$.
    Verifying the hereditary and exchange axioms is left to the reader.
    Observe, the words correspond to feasible sequences of decisions made by Prim's algorithm, and cannot induce a matroid (in particular, no support without an edge incident to the root can be feasible) in contrast to Kruskal's algorithm.
\end{example}

The (greedoid) \emph{rank} $\rank(X)$ of $X\subseteq \univ$ is given by the length of the longest word in $\glang$ constructed using only letters from $X$.
The rank of the greedoid $r(\glang)$ is the length of a basic word.
A letter $x \in \univ$ is a \emph{loop} whenever there exists no feasible word containing it.
Such letters are deficient, and so a greedoid is \emph{normal} if its alphabet contains no loops.
Loops create busy notation for no benefit, so we sometimes assume a greedoid is normal.
The \emph{basis rank} of a set $X$ is the cardinality of the largest intersection between $X$ and the support of a basic word, i.e. $b(X)\triangleq \max_{\alpha\in\glang}|\widetilde{\alpha}\cap X|.$
The rank closure $\sigma_r:2^\univ\to 2^\univ$ is defined in the same way as a polymatroid span.
But this is not a formal closure operator as $\sigma_r$ lacks monotonicity.
The \emph{kernel closure} $\kappa(X)$ of a set $X$ is the union of feasible supports contained in $\sigma_{\rank}(X)$,
\begin{equation}\label{eq:flat support}
    \kappa(X) \triangleq \bigcup\left\{\widetilde{\alpha}\mid \alpha \in \glang\text{ and }\widetilde{\alpha} \subseteq \sigma_{\rank}(X)\right\}.
\end{equation}
Hence, for normal matroids one sees that the kernel closure and span operator are in agreement, i.e.\ $\kappa(X) = \sigma_{\rank}(X)$ for all $X \subseteq \univ$.
However, $\kappa$ is not extensive (or even monotone for greedoids without the interval property).
An exception is feasible supports, since $\alpha\in\glang$ implies $\widetilde{\alpha}\subseteq\sigma_r(\widetilde{\alpha})$.
This observation extends to any \emph{partial alphabet}, a union of feasible supports, as well.

\begin{figure}[t!]
    \centering
    \subfloat[Graph with root $s$.]
    {
        \scalebox{.8}{\begin{tikzpicture}
    \node[draw, circle] (v1) {};
    \node[draw, circle] (v2) [above left=of v1] {};
    \node[draw, circle] (v3) [below left=of v1] {};
    \node (inv1) [below=of v1, yshift=-35pt] {};

    \node[draw, circle] (v4) [right=of v1] {$s$};

    \draw (v1) -- node[midway, above right] {$d$} (v2);
    \draw (v2) to[out=225,in=135] node[midway, left] {$b$} (v3) ;
    \draw (v3) -- node[midway, below right] {$c$} (v1);

    \draw (v1) -- node[midway, above] {$a$} (v4);
\end{tikzpicture}}
    }
    \hfil
    \subfloat[Hasse diagram of flats.]
    {
        \scalebox{.8}{\begin{tikzpicture}
    \tikzstyle{S}=[rectangle, draw=black, rounded corners=5pt]
    \node[S] (empty) {$\{\epsilon\}$};
    \node[S] (a) [above=of empty, yshift=-5pt] {$\{a\}$};
    \node[S] (b) [above left=of a, yshift=-5pt] {$\{ac\}$};
    \node[S] (c) [above right=of a, yshift=-5pt] {$\{ad\}$};
    \node (inv) [above=of a] {};
    \node[S] (top) [above=of inv] {$\{acb, acd, adb, adc\}$};

    \foreach \from/\to in {empty/a, a/b, a/c, b/top, c/top}
    \draw (\from) -- (\to);
\end{tikzpicture}}
    }
    \caption{A graph with root $s$ and the lattice of flats of the corresponding undirected branching greedoid. We see that $[ac]\sqcap[ad] = [a]$ because $ac \not\sim ad$, for example. Furthermore, the reader should note (or convince themselves) that the top flat of a greedoid is always given by the basic words. 
    }\label{fig:ubg}
\end{figure}

For $\alpha \in \glang$ the contraction minor $\glang/\alpha$ is the suffixes which can be concatenated onto $\alpha$ while maintaining feasibility in $\glang$,
\[
    \glang/\alpha \triangleq \{\beta \in \univ^*\mid \alpha\beta \in \glang\}.
\]
Contraction minors are greedoids, which is to say contraction preserves the non-emptiness, hereditary, and exchange axioms.
For another $\beta \in \glang$, $\alpha \sim \beta$ if $\glang/\alpha = \glang/\beta$.
A \emph{flat} is an equivalence class in $\glang/\mathord{\sim}$, and the equivalence class containing $\alpha$ is $[\alpha]$.
We say $\alpha$ \emph{spans} $F \in \glang/\mathord{\sim}$ if $\alpha \in F$ (equivalently, $[\alpha] = F$).
Flats make a partial order: For $\alpha,\beta\in\glang$, $[\alpha] \sqsubseteq [\beta]$ if and only if there exists $\gamma\in\glang/\alpha$ where $\alpha \gamma\in\glang$ and $\alpha \gamma \sim \beta$.
In some sense, this means that the flat $[\beta]$ is spanned by some number of words with prefixes spanning $[\alpha]$.
Moreover, $[\alpha]\prec[\beta]$ if and only if there exists $x\in\univ$ and $\gamma \in \univ^*$ such that $\gamma \sim \alpha$ and $\gamma x \sim \beta$.
To see this, examine Figure~\ref{fig:ubg} which shows the lattice of flats of an undirected branching greedoid, as described in Example~\ref{ex:undirected-branching-greedoid}.
Finally, let $\glang/[\alpha] = \glang/\alpha$ so that we can use the notation $\glang/F = \glang/\alpha$ for any choice of $\alpha \in F$ and flat $F\in\glang/\mathord{\sim}$.

Note that the flats of a contraction minor $\glang/F'$ are in bijective correspondence with some \emph{principal filter}, that is a filter with a single minimum element, of the poset of flats of $\glang$.
In many of our proofs it will be useful to make use of this.
Specifically, let $F\mapsto F/F'$ be the described bijection, which is to say,
\[
    F/F' \triangleq \{\alpha\in\glang/F'\mid (\exists \beta \in F')\;\beta\alpha \in F\}.
\]
When referring to $F/F'$, we tacitly assume $F'\sqsubseteq F$.
This is, $F/F'$ is undefined whenever $F'\not\sqsubseteq F$.

Call a simple word $\alpha'\in\univ^*$ a \emph{subword} of $\alpha\in\glang$ if and only if
$\widetilde{\alpha}' \subseteq \widetilde{\alpha}$ and the letters of $\alpha'$ occur in an order agreeing with $\alpha$ (i.e. $x$ appears before $y$ in $\alpha'$ implies $x$ appears before $y$ in $\alpha$).
Then, the interval property is defined by the guarantee that a shorter feasible word can always be augmented by a subword of a longer feasible word to achieve length at least that of the longer word.
\begin{definition}[Interval Property]
    Let $\glang$ be a greedoid.
    Then, $\glang$ has the \emph{interval property} if for all $\alpha ,\beta \in \glang$ with $|\beta| > |\alpha|$, there exists a subword $\beta'$ of $\beta$ of length $|\beta'| \geq |\beta| - |\alpha|$ with $\alpha \beta'\in\glang$.
\end{definition}
All greedoids under our consideration will satisfy the interval property.
For interval greedoids, the flats and the image of the kernel closure operator are in bijective correspondence because the interval property ensures that a set $X \in \kappa\left(2^\univ\right)$ if and only if the set of feasible words with supports contained in $X$ form a flat \cite{korte2012greedoids}.
For this reason we call a set in the $\kappa$-image a \emph{flat support}, and (as we always assume interval property) abuse notation by letting,
$$\kappa[\alpha] = \kappa(F) = \bigcup \{\widetilde{\beta}\mid \beta \in F\}.$$
This means that $\kappa(F)$ is the flat support of $F$.
Then, depending on context (that being whether its operand is a set of letters or a flat) either $\kappa$ maps from $2^\univ$ to $2^\univ$ like Eq.~\ref{eq:flat support}, or from $\glang/\mathord{\sim}$ to $2^\univ$ in the way described above.
Furthermore, due to this bijection we introduce another abuse of notation by letting $\kappa^{-1}(X)$ give the unique flat $F\in\glang/\mathord{\sim}$ such that $\kappa(F) = \kappa(X)$, which can be defined more precisely as,
\begin{equation}\label{eq:kappa-inv}
    \kappa^{-1}(X) \triangleq \bigsqcup \big\{[\alpha] \bigm\vert \alpha \in \glang\cap X^*\big\}.
\end{equation}
As justification, note $\kappa^{-1}\circ\kappa$ is the identity mapping when applied to $\glang/\mathord{\sim}$.
Another useful property is that the flats of an interval greedoid are \emph{cospanning}, which is to say that $\alpha \sim \beta$ if and only if $\sigma_r(\widetilde{\alpha}) = \sigma_r(\widetilde{\beta})$.
Correspondingly, since the \emph{continuations} of a feasible $\alpha$, that is those letters $x \in \univ$ such that $\alpha x$ is feasible, are the complement of the span $\sigma_r(\widetilde{\alpha})$ it follows that all words in a flat have the same set of continuations as well (in the presence of the interval property).
Thus, we use a notation similar to that enjoyed by the flat supports by letting $\Gamma[\alpha] \triangleq \univ\setminus\sigma_r(\widetilde{\alpha})$ be the continuations of any feasible $\alpha$, and $\Gamma(F) = \Gamma[\alpha]$ for $\alpha \in F$.

Finally, the flats of an interval greedoid form a semimodular lattice \cite{korte2012greedoids}.
This lattice, which we refer to by $\L(\glang)\triangleq (\glang/\mathord{\sim},\sqsubseteq)$, is graded by the rank of the words spanning a flat.
In light of this, we let $r(F) = r(\widetilde{\alpha})$ for any $\alpha \in F$.
Moreover, the lattice of flats is isomorphic to the flat supports ordered by containment \cite{korte2012greedoids}, i.e. $\left(\kappa\left(2^\univ\right),\subseteq\right)$.
We often use this fact implicitly.
A nice consequence is,
\begin{align*}
    \kappa(F\sqcup F') = \kappa(\kappa(F)\cup \kappa(F')),&&\text{and},&&\kappa(F\sqcap F') = \kappa(\kappa(F)\cap\kappa(F')),
\end{align*}
for all $F,F'\in\glang/\mathord{\sim}$.
To see the former, first observe $\kappa(\kappa(F)\cup\kappa(F')) = \kappa(F)\sqcup \kappa(F')$ as one can verify $\kappa(\kappa(F)\cup\kappa(F'))$ is the least flat support containing $\kappa(F)\cup\kappa(F')$.
Then, $\kappa(F)\sqcup\kappa(F') = \kappa(F\sqcup F')$ by the isomorphism.
The latter $\kappa(F\sqcap F') = \kappa(\kappa(F)\cap\kappa(F'))$ follows similarly.

\section{Polymatroid Representations}\label{sec:pm-greedoid}

A polymatroid greedoid $\glang$ is such that there exists a polymatroid rank function $\rho:2^\univ\to\mathbb{R}$ encoding the feasible words.
Specifically, we call $\rho$ a \emph{representation} if and only if,
\begin{equation}\label{eq:rep}
    \glang = \{x_1\ldots x_k\in\univ^*\mid (\forall i \in \{1,\ldots,k\})\;\rho(\{x_1,\ldots,x_i\}) = i\}.
\end{equation}
Korte and Lov{\'a}sz define the \emph{polymatroid greedoid} as greedoids possessing integral representations.
However, we generalize this.
By inspecting Eq.~\ref{eq:rep}, we see that a representation is in partial agreement with the greedoid rank, since they are equal on feasible supports.
However, more interesting is the representation maintains a submodular structure even among sets which \emph{cannot} be permuted into a feasible word.
The class of polymatroid greedoid is broad, including matroids, poset antimatroids, undirected branching greedoids, and certain ordered geometries.
See \cite{korte1985polymatroid} or Ch. 7 of \cite{korte2012greedoids}.

\begin{example}[Undirected Branching Greedoid]\label{ex:ubg-rep}
     Let $(V, E)$ be a graph and $s \in V$ a root.
    Then, for a set of edges $X \subseteq E$ let $\rho(X)$ give the number of vertices in $V - s$ covered by $X$.
    Then $\rho$ is monotone and normalized, and one can verify the law of diminishing returns.
    Let $\glang$ be the undirected branching greedoid.
    Fix $\alpha\in \glang$.
    The length $|\alpha|$ is precisely the number of vertices covered by $\widetilde{\alpha}$ not equal to $s$, so $\rho(\widetilde{\alpha}) = |\alpha|$.
    As $\alpha$ was generic, $x \in \Gamma[\alpha]$ implies $\left(\rho/\widetilde{\alpha}\right)=1$.
    For $x \notin\Gamma[\alpha]$, we see that both or none of the endpoints of $x$ are covered, so $\left(\rho/\widetilde{\alpha}\right)\in\{0,2\}$.
    See Table~\ref{table:ubg-rep} and Figure~\ref{fig:ubg-rep}.
\end{example}

\begin{figure}
\begin{floatrow}
\capbtabbox[0.5\textwidth]{%
    \centering
    \scalebox{.8}{\begin{tabular}{|c c || c  c|} 
     \hline
     $X$ & $\rho(X)$ & $X$ & $\rho(X)$ \\
     \hline\hline
     $\varnothing$ & 0 & $\{b, c\}$ & 3 \\ 
     \hline
     $\{a\}$ & 1 & $\{b, d\}$ & 3 \\
     \hline
     $\{b\}$ & 2 & $\{c, d\}$ & 3 \\
     \hline
     $\{c\}$ & 2 & $\{a,b,c\}$ & 3 \\
     \hline
     $\{d\}$ & 2 & $\{a,b,d\}$ & 3 \\  
     \hline
     $\{a,b\}$ & 3 & $\{a,c,d\}$ & 3 \\
     \hline
     $\{a,c\}$ & 2 & $\{b,c,d\}$ & 3 \\
     \hline
     $\{a,d\}$ & 2 & $\{a,b,c,d\}$ & 3 \\
     \hline
\end{tabular}}
    \vspace{.325\baselineskip}
}{%
    \caption{Values of the representation from Example~\ref{ex:ubg-rep} applied to the greedoid in Figure~\ref{fig:ubg}.}\label{table:ubg-rep}%
}\ffigbox[0.5\textwidth]{%
    \centering
    \scalebox{.8}{\begin{tikzpicture}
    \tikzstyle{S}=[rectangle, draw=black, rounded corners=5pt]
    \node[S] (empty) {$\varnothing$};
    \node[S] (ap) [above=of empty, yshift=-5pt] {$\{a\}$};
    \node[S] (adp) [above=of ap, yshift=-5pt] {$\{a,d\}$};
    \node[S] (acp) [left=of adp] {$\{a,c\}$};
    \node[S] (topp) [above=of adp, yshift=-5pt] {$\{a,b,c,d\}$};
    \node[S] (bp) [right=of adp] {$\{b\}$};
 
    \foreach \from/\to in {empty/ap, empty/bp, ap/adp, ap/acp, acp/topp, adp/topp, bp/topp}
    \draw (\from) -- (\to);
\end{tikzpicture}}
}{%
    \caption{The Hasse diagram of the closed sets\\of $\rho$ from Table~\ref{table:ubg-rep}, ordered by containment.}\label{fig:ubg-rep}%
}
\end{floatrow}
\end{figure}

Polymatroid greedoids belong to a subclass of interval greedoids called \emph{local poset greedoids} \cite{korte1985polymatroid}.
Informally, this property enforces a kind of local distributive lattice structure on the feasible supports by ensuring that union and intersection of feasible supports contained in a larger feasible support can always be permuted into feasible words.

\begin{definition}[Local Poset Property \cite{korte1985polymatroid}]\label{def:local-poset}
    Let $\glang$ be a greedoid. Then, $\glang$ possesses the \emph{local poset property} if and only if for all $\alpha,\beta\in\glang$, if there exists $\gamma\in\glang$ whose support contains $\widetilde{\alpha}$ and $\widetilde{\beta}$, then the union $\widetilde{\alpha}\cup\widetilde{\beta}$ and the intersection $\widetilde{\alpha}\cap\widetilde{\beta}$ are also feasible supports, i.e., 
\begin{equation}\label{eq:local-poset}
    \left(\exists \gamma \in \glang\right)\; \widetilde{\alpha}\cup \widetilde{\beta} \subseteq \widetilde{\gamma} \implies \left(\widetilde{\alpha}\cup \widetilde{\beta}\right)! \cap \glang \neq \varnothing\text{ and }\left(\widetilde{\alpha}\cap \widetilde{\beta}\right)!\cap \glang \neq \varnothing.
\end{equation}
\end{definition}

Though assumed by \cite{korte1985polymatroid}, our work does not require a representation be integral.
Furthermore, the proof showing that polymatroid greedoids are local poset greedoids in \cite{korte1985polymatroid} does not require integrality.
We verify this in Appendix~\ref{app:deferred-lp} for completeness.
\begin{restatable}[\cite{korte1985polymatroid}]{proposition}{intervalprop}\label{prop:representation->local-poset} 
    If a greedoid $\glang$ has any polymatroid representation $\rho:2^\univ\to\mathbb{R}$, then it possesses the local poset property.
\end{restatable}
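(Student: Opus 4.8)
The plan is to prove the interval property by recasting feasibility and the notion of a continuation as conditions on the marginal returns of the representation $\rho$, and then to let submodularity, in its diminishing-returns form, do the work. First I would set up the dictionary furnished by Eq.~\ref{eq:rep}: for $\alpha \in \glang$ with $\Tilde{\alpha} = A$ we have $\rho(A) = |A|$, and for $x \notin A$ the word $\alpha x$ is feasible if and only if $\rho(A+x) = |A|+1$, i.e.\ the marginal return $(\rho/A)(x)$ equals $1$. Consequently any set $S$ spanned by a feasible word satisfies $\rho(S) = |S| = \rank(S)$, and a feasible set $A$ for which $A+x$ is also feasible forces $(\rho/A)(x) = 1$.

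The crux is a sandwiching argument, which I regard as the representation-theoretic heart of the interval property. I would fix feasible sets $A \subseteq B \subseteq C$ (each the letter set of a feasible word) together with a letter $x \notin C$ such that both $A+x$ and $C+x$ are feasible, so that the dictionary gives $(\rho/A)(x) = 1$ and $(\rho/C)(x) = 1$. Since $x \notin C \supseteq B \supseteq A$, applying the law of diminishing returns along $A \subseteq B \subseteq C$ yields $(\rho/A)(x) \ge (\rho/B)(x) \ge (\rho/C)(x)$, that is $1 \ge (\rho/B)(x) \ge 1$, whence $(\rho/B)(x) = 1$ and $\rho(B+x) = |B|+1$. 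Choosing any feasible word $\beta$ with $\Tilde{\beta} = B$, each proper prefix of $\beta$ already meets the rank condition of Eq.~\ref{eq:rep} and the last step contributes marginal $1$, so $\beta x \in \glang$ and $B+x$ is feasible. This is exactly the interval property in its substantive middle-set form, and, crucially, it never uses integrality of $\rho$: the only numerical inputs are the marginals equal to $1$ forced along feasible words together with monotonicity of marginals, so the argument goes through verbatim for real-valued representations.

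Finally I would reconcile this with the word/subword phrasing of the Definition. Given $\alpha,\beta \in \glang$ with $|\beta| > |\alpha|$, mere nonemptiness of the augmenting subword is immediate from the exchange axiom, which produces a single letter $x \in \Tilde{\beta}$ with $\alpha x \in \glang$, so that $\beta' = x$ already witnesses the statement; the representation is not needed for this part. What the representation genuinely secures, via the sandwiching step, is the coherent order-preserving augmentation through nested feasible words that distinguishes interval greedoids from arbitrary greedoids, so it is this middle-set closure, and not the trivial exchange witness, that constitutes the content of the proposition. I expect the main obstacle to be precisely the bookkeeping in this translation: keeping straight the interplay between a \emph{feasible set} (the letter set of a feasible word) and a \emph{feasible word}, and re-ordering the middle set $B$ from scratch rather than by extending a fixed ordering of $A$, so that appending $x$ lands back in $\glang$. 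Once the marginal-return dictionary is in place, the diminishing-returns inequality settles the substantive content in a single line.
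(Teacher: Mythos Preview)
Your proposal is correct and is essentially identical to the paper's proof: both work with the equivalent ``middle-set'' formulation of the interval property (feasible $A\subseteq B\subseteq C$ with $A+x$ and $C+x$ feasible) and obtain $(\rho/B)(x)=1$ by sandwiching via the law of diminishing returns, then append $x$ to any feasible ordering of $B$. Your additional remark that the word/subword phrasing of the Definition is already witnessed by a single exchange letter, so that the substantive content lies in the nested-sets form, is a fair observation and does not diverge from the paper's argument.
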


Our first main result in Section~\ref{sec:characterization} requires the construction of a polymatroid representation, however doing so under the constraint that the representation is challenging.
Though integrality doesn't seem necessary for defining polymatroid greedoids, general continuous representations do lose important properties.
This includes relationships with the flat supports proven in Main Result~\ref{mr:strong-pm-greedoid}, as well as the optimism property we introduce in Section~\ref{subsec:optimistic}.
So, in the next section we take an axiomatic approach towards defining what makes a representation ``useful,'' and we expand our definition of a polymatroid greedoid to be those possessing \emph{aligned representations}.
Following this, we introduce \emph{optimism}, a useful weakening of the monotonicity of a matroid's span, in Section~\ref{subsec:optimistic} and the Forking Lemma in Section~\ref{subsec:forking}.
Both will be pervasive in our analyses in Sections~\ref{sec:characterization} and~\ref{sec:galois}.
\subsection{Alignment}\label{subsec:alignment}

Because the lattice of closed sets encodes the combinatorial information of a polymatroid, we define alignment in terms of a special order-preserving mapping between $\L(\glang)$ and $\L(\rho)$.

\begin{definition}[Aligned Representation]\label{def:aligned}
    Let $\glang$ be a greedoid and $\rho:2^\univ\to\mathbb{R}$ a polymatroid rank function, with $\L(\glang)$ and $\L(\rho)$ being the respective lattices of flats and closed sets.
    Then, $\L(\glang)$ and $\L(\rho)$ are \emph{aligned} if and only if there exists an order-preserving $\varphi: \L(\glang) \to \L(\rho)$ satisfying,
    \begin{enumerate}
        \item \emph{(Agreement)} $\rho(\widetilde{\alpha}) = \rho(\varphi[\alpha])$ for all feasible words $\alpha \in \glang$,
        \item \emph{(Inclusion)} $\widetilde{\alpha}\subseteq\varphi[\alpha]$ for all feasible words $\alpha \in \glang$,
        \item \emph{(Cover-Preservation)} $F \prec F' \implies \varphi(F) \prec \varphi(F')$ for all greedoid flats $F, F'\in\glang/\mathord{\sim}$.
    \end{enumerate}
   %
\end{definition}

These axioms are a minimal set of conditions necessary for the combinatorial structures of a greedoid and a representation to be compatible in a way that isn't spurious.
They can be intuited and justified without reference to our forthcoming results:
\emph{Agreement} and \emph{inclusion} both ensure the representation maintains information about the greedoid's flats within the polymatroid's closure system.
Specifically, agreement preserves the rank information, while inclusion enforces the identities of letters in $\rho$-closed sets map back to the supports of the $\glang$-flats. 
\emph{Cover-preservation} ensures chains of $\L(\rho)$ can be associated with feasible words, much like how feasible words encode sets of chains in $\L(\glang)$.
Such makes $\L(\rho)$ reflect the feasibility structure whenever $\L(\rho)$ and $\L(\glang)$ are aligned.

If $\rho$ is a representation where $\L(\glang)$ and $\L(\rho)$ are aligned we call it an \emph{aligned representation}.
Using results from \cite{korte1985polymatroid}, verifying integral representations are \emph{always} aligned is straightforward.

\begin{proposition}\label{prop:int->aligned}
    Let $\glang$ be a greedoid and $\rho$ a polymatroid rank function.
    Suppose that $\rho$ is a representation of $\glang$.
    If $\rho$ is integral, then it is an aligned representation.
\end{proposition}

This requires the following statement about the partial alphabets of the integral polymatroid greedoids examined in \cite{korte1985polymatroid}.

\begin{lemma}[Corollary 4.4 of \cite{korte1985polymatroid}]\label{lem:pa-rank}
    Let $\glang$ be a greedoid with integral representation $\rho$.
    Then, for all partial alphabets $X$, $\rho(X) = r(X)$.
\end{lemma}

\begin{proof}[Proof of Proposition~\ref{prop:int->aligned}]
    We show that $\varphi=\sigma_\rho\circ\kappa$ is a suitable mapping witnessing alignment.
    Because a flat support is a partial alphabet, Lemma~\ref{lem:pa-rank} implies $\rho(\kappa[\alpha])= r(\kappa[\alpha])$ for all $\alpha \in \glang$.
    Since $r(\kappa[\alpha]) = r[\alpha]$, agreement follows.
    One also sees cover-preservation by integrality.
    Finally, inclusion follows by our choice of $\varphi$.
\end{proof}

\subsection{Optimism}\label{subsec:optimistic}

Suppose one wishes to build a basic word $x_1\ldots x_r \in\glang$ in an iterative fashion.
A subtle property of a normal matroid is that a letter $y$ is a continuation of the empty word $\epsilon$, and remains so until the first prefix containing $y$ in its span.
However, this dynamic is lost in the greedoid setting.
Because of its usefulness, we introduce a weakening of this dynamic being every letter is a continuation at \emph{some} point when building basic words step by step.
We call this \emph{optimism}.
\begin{definition}[Optimistic Greedoid]\label{def:optimism}
    A greedoid $\glang$ is \emph{optimistic} if and only if for all $y\in\univ$ which are not loops, and basic words $x_1\ldots x_r \in \glang$, there exists some index $i$ such that $y \in \Gamma[x_1\ldots x_i]$.
    Furthermore, $\glang$ is \emph{strongly optimistic} if and only if there exists an index $i$ such that,
    \[
        y \in \Gamma[x_1\ldots x_j] \iff i\leq j\text{ and }x \notin\kappa[x_1\ldots x_j]. 
    \]
\end{definition}
For Sections~\ref{sec:characterization} and~\ref{sec:galois}, it is helpful to observe that strong optimism makes $x\notin\kappa[\alpha]$ imply the existence of $\beta x\in\glang/\alpha$.
Furthermore, strong optimism also makes $F\prec F'$ and $x \in \kappa(F')\setminus\kappa(F)$ imply $x\in\Gamma(F)$.
Now we show that aligned representations imply optimism.

\begin{theorem}\label{thm:pm->optimism}
    Let $\glang$ be a greedoid possessing an aligned representation $\rho$.
    Then, $\glang$ is optimistic.
\end{theorem}

\begin{proof}
    Let $\varphi:\L(\glang)\to\L(\rho)$ be the mapping satisfying the alignment axioms, and
    fix a basic word $x_1\ldots x_R \in \glang$ and non-loop $y\in\univ$.
    Note that inclusion implies $\kappa[\alpha]\subseteq\varphi[\alpha]$ for all $\alpha \in \glang$.
    So, because there is a feasible word containing $y$ in its support it follows that $y \in \varphi[x_1\ldots x_R]$.
    Moreover, agreement makes $\rho(\varphi[\epsilon]) = 0$, and so $\varphi[\epsilon]$ can only contain loops of the greedoid.
    This makes $y \in \varphi[x_1\ldots x_R]\setminus\varphi[\epsilon]$, and as cover-preservation implies,
    \[
        \varphi[\epsilon] \prec \varphi[x_1]\prec\varphi[x_1x_2]\prec\ldots\prec\varphi[x_1\ldots x_R],
    \]
    it follows that there exists an index $i \in \{1,\ldots, R\}$ such that $y \in \varphi[x_1\ldots x_i]\setminus\varphi[x_1\ldots x_{i-1}]$.
    By cover-preservation, we see that $\sigma_\rho(\varphi[x_1\ldots x_{i-1}] + y) = \varphi[x_1\ldots x_i],$ which makes,
    \[
        (\rho/\{x_1,\ldots,x_{i-1})(y) = (\rho/\varphi[x_1\ldots x_{i-1}])(y) = (\rho/\varphi[x_1\ldots x_{i-1}])(x_i) = (\rho/\{x_1\ldots x_{i-1}\})(x_i) = 1.
    \]
    Thus $y \in \Gamma[x_1 \ldots x_{i-1}]$.
    As our choices were arbitrary, optimism follows.
\end{proof}

\begin{remark}
    Though they do not use our terminology, optimism was proven for integral representations in Corollary 4.17 of \cite{korte1985polymatroid}.
    We gave a shorter argument for the more general setting.
\end{remark}
\begin{remark}
    A general continuous representation does not imply optimism.
\end{remark}

The relationships between optimistic greedoids and other classes in the greedoid taxonomy are shown in Figure~\ref{fig:tax}, and proven in Appendix~\ref{app:optimism} and~\ref{app:local-augmentation}.
In particular, we show that every strong-exchange greedoid is optimistic.
Furthermore, we use a trimmed matroid construction to show the existence of an optimistic interval greedoid without the local poset property, and adapt a construction from \cite{korte1988intersection} to make an optimistic local poset greedoid which cannot possess any aligned representation.
Describing polymatroid greedoids by greedoid properties which exist independently of any polymatroid representation is an important open problem and, by the latter construction, optimism and the local poset property cannot do so alone.

\subsection{The Forking Lemma}\label{subsec:forking}
\begin{figure}[t!]
    \centering
    \scalebox{.8}{\newcommand{\drawSolidDashedEdge}[2]{
    \coordinate (start) at (#1);
    \coordinate (end) at (#2);

    \path (start) -- (end) coordinate[pos=0.2] (firstsolid)
                        coordinate[pos=0.8] (secondsolid);

    \draw[solid] (#1) -- (firstsolid);
    
    \draw[thick,loosely dotted] (firstsolid) -- (secondsolid);
    
    \draw[solid] (secondsolid) -- (#2);
}
\newcommand{\drawSolidDashedEdgeTwo}[2]{
    \coordinate (start) at (#1);
    \coordinate (end) at (#2);

    \path (start) -- (end) coordinate[pos=0.3] (firstsolid)
                        coordinate[pos=0.7] (secondsolid);

    \draw[solid] (#1) -- (firstsolid);
    
    \draw[thick,loosely dotted] (firstsolid) -- (secondsolid);
    
    \draw[solid] (secondsolid) -- (#2);
}

\begin{tikzpicture}
    \tikzstyle{S}=[rectangle, draw=black, rounded corners=5pt]

    \node[S] (Fp) at (1.75,.5) {$F'$};
    \node[S] (F) at (-1.75, -.25) {$F$};

    \node[S] (m) at (0, -2.5) {$F\sqcap F' =[\mu]$};
    \node[S] (mx) at (.75, -1.25) {$[\mu x]$};
    \node[S] (Fx) at (-1, 1) {$F \sqcup [\mu x]$};
    \node[S] (join) at (0, 2.75) {$F \sqcup F'$};

    \draw (m) -- (mx) node [midway, right] {$x$};
    \draw (F) -- (Fx) node [midway, left] {$x$};

    \drawSolidDashedEdgeTwo{mx}{Fp};
    \drawSolidDashedEdge{m}{F};
    \drawSolidDashedEdge{Fp}{join};
    \drawSolidDashedEdgeTwo{Fx}{join};
\end{tikzpicture}
}
    \caption{Above is a sublattice of $\L(\glang)$ satisfying the premise described in the Forking Lemma. We draw the covering relation by solid lines, labelled by $x$ as it is a continuation advancing words in the lower neighbor to words in the upper neighbor, and denote a flat lying below another in a way which may or may not be covering by the dotted lines. Then, $F$ and $F'$ are such that the letter $x$ is a continuation of the meet $F\sqcap F' = [\mu]$, with $[\mu x]$ lying below $F'$ but not $F$. Then, $x$ must be a continuation of $F$ as a consequence. By semimodularity $F \sqcup [\mu x]$ covers $F$, so $F \prec \kappa^{-1}(\kappa(F) + x)$ (what we just found) and $\kappa^{-1}(\kappa(F) + x) \sqsubseteq F\sqcup [\mu x]$ imply $\kappa^{-1}(\kappa(F) + x) = F\sqcup [\mu x]$.}\label{fig:forking}
\end{figure}


Now we present our Forking Lemma for interval greedoids.
This lemma will be frequently applied in Sections~\ref{sec:characterization} and~\ref{sec:galois} for obtaining our main results.
Furthermore, towards the end of the section we use our lemma to show that optimistic interval greedoids satisfy a stronger form of Definition~\ref{def:optimism}.

\begin{lemma}[Forking Lemma]\label{lem:forking}
    Let $\glang$ be a greedoid with the interval property, and select two flats $F, F'\in\glang/\mathord{\sim}$.
    Suppose that $\mu \in F\sqcap F'$.
    Then, $x \in \Gamma(F\sqcap F')$ and $[\mu x] \sqsubseteq F'$ implies $x \in \Gamma(F)$.
\end{lemma}

We imagine the continuation $x\in\Gamma(F\sqcap F')$ as creating a ``fork'' in that it leads to a flat beneath $F'$ but not $F$ whenever $F\neq F'$, see Figure~\ref{fig:forking}.
In some sense, the lemma thus shows that concatenation with $x$ is partially commutative.
Compare these guarantees to the exchange and interval properties:
Suppose $\alpha, \beta \in \glang$ with $|\alpha| < |\beta|$.
Then, the exchange and interval properties guarantee \emph{existence} of $x \in\widetilde{\beta}\setminus \widetilde{\alpha}$ with $x\in\Gamma[\alpha]$, but cannot guarantee a \emph{specific} identity of $x$.
In contrast, the Forking Lemma can be applied to witness a specific identity of such a continuation $x$.

To prove this result, we induct over the difference in grade between $F$ and the meet $F\sqcap F'$.
For this, we will require the following weaker result making a statement similar to the Forking Lemma, but only for words differing by one letter in the same location.
\begin{lemma}\label{lem:inner-forking}
    Let $\glang$ be a greedoid with the interval property, and $\mu \in \glang$ a feasible word.
    If there exists $x,y \in \Gamma[\mu]$ and $\alpha \in \univ^*$ with $\mu x \alpha$ and $\mu y \alpha$ feasible, then $\mu x \alpha \not\sim \mu y \alpha$ implies $\mu y \alpha x \in \glang$.
\end{lemma}
\begin{proof}
    Select $x,y \in \Gamma[\mu]$ and $\alpha \in \univ^*$ satisfying the premise.
    Note that $\mu x \alpha \not\sim \mu y \alpha$ ensures that $x \neq y$, making the task of showing $x \in \Gamma[\mu y \alpha]$ well-formed.
    To do so, first recall that two words in the same flat of an interval greedoid are cospanning.
    Hence, via negation we find $\Gamma[\mu x \alpha] \neq \Gamma[\mu y \alpha]$.
    First, suppose $\Gamma[\mu x \alpha] \setminus \Gamma[\mu y \alpha] \neq \varnothing$, and select a letter $z$ in this difference.
    Then because,
    \begin{equation}\label{eq:pidgeonhole}
        \widetilde{\mu x \alpha z} \setminus \widetilde{\mu y \alpha} = \{x, z\},
    \end{equation}
    and the words of $\glang$ are simple, the fact that $\mu x \alpha z \in \glang$ and $|\mu x \alpha z| > |\mu y \alpha|$ makes the lemma by way of the exchange axiom.
    More precisely, since $z \notin \Gamma[\mu y \alpha]$ and Eq.~\ref{eq:pidgeonhole} makes $x$ the only letter that can be concatenated onto $\mu y \alpha$ from $\widetilde{\mu x \alpha z}$ while maintaining feasibility, we conclude that $\mu y \alpha x \in \glang$.
    
    To finish, we must also verify that the case of $\Gamma[\mu x\alpha]\setminus \Gamma[\mu y \alpha] = \varnothing$ does not pose any problems.
    If this is true, then there exists $z \in \Gamma[\mu y \alpha]\setminus \Gamma[\mu x \alpha]$.
    Then, by symmetry the preceding arguments show that $\mu x \alpha y \in \glang$.
    Therefore, by simple exchange it must be the case that $\mu y \alpha x$ is feasible as well, because $|\mu x \alpha y| > |\mu y \alpha|$ and $\widetilde{\mu x \alpha y} \setminus \widetilde{\mu y \alpha} = \{x\}$.
\end{proof}
\begin{proof}[Proof of the Forking Lemma]
    Let $\mu \in \glang$ and $x \in \Gamma[\mu]$ be as described in Lemma~\ref{lem:forking}.
    Put more concisely, our assumptions make,
    $$F\sqcap F' = [\mu] \sqsubset [\mu x] \sqsubseteq F'.$$
    and from this, the reader should observe that the strict ordering relation between $[\mu]$ and $[\mu x]$ ensures $[\mu x] \not\sqsubseteq F$, thereby making $F \neq F'$ as well.
    Hence, there exists a letter $y \in \Gamma[\mu]$ and $\alpha \in \univ^*$ where $y \neq x$ and $\mu y \alpha \in F$.
    To prove the lemma, we will induct over the difference $r(F) - r[\mu]$.
    In the base case where $r(F) - r(F\sqcap F') = 1$, we see that $\alpha = \epsilon$.
    Then, because $\mu y \in F$ and $[\mu x] \not \sqsubseteq F$, it is the case that $\mu x \not\sim \mu y$.
    Therefore, by applying Lemma~\ref{lem:inner-forking} to $\mu x \epsilon$ and $\mu y \epsilon$ we find $\mu y \epsilon x = \mu y x$ is feasible, which makes the base case as this is equivalent to stating $x \in \Gamma(F)$.

    Now, recalling $\mu y \alpha \in F$, for the inductive step we make $r(F) - r[\mu] > 1$ by assuming $\alpha = \beta z$ where $z \in \univ$ is a letter and $\beta \in \univ^*$ is a simple word, possibly $\beta = \epsilon$.
    By the interval property there exists a subword of $\mu y \beta z$ of length at least $|\mu y \beta z| - |\mu x| = |\beta| + 1$ which can be concatenated onto $\mu x$ to make a feasible word.
    The resulting feasible word must be simple (no letters can be repeated), and so there are three possibilities for the identity of this observed subword:
    \begin{enumerate}
        \item The subword equals $y\beta$, and so $\mu x y \beta$ is feasible,
        \item The subword equals $\beta z$, and so $\mu x \beta z$ is feasible,
        \item There exists another subword $\beta'$ of $\beta$ resulting from the deletion of some single letter, which is to say $|\beta'| = |\beta| -1$, such that $\mu x y \beta' z$ is feasible.
    \end{enumerate}
    Suppose that either (1.) or (3.) is true.
    Then, in either of these cases the hereditary axiom makes $\mu xy$ feasible, and thus $\mu y x$ is feasible as well by exchange from $\mu xy$ to $\mu y$.
    Furthermore, $[\mu x] \not\sqsubseteq F$ implies $[\mu y x] \not\sqsubseteq F$ as well (since $[\mu x] \prec [\mu y x]$ by $y \in \Gamma[\mu x]$), and so $[\mu y] \sqsubseteq F$ makes $F \sqcap [\mu yx] = [\mu y]$ by construction.
    As a consequence,
    \[
        r(F) - r(F \sqcap [\mu y x]) = r(F) - r([\mu y]) = r(F) - r([\mu]) + 1.
    \]
    Therefore, by applying the inductive hypothesis to $F$ and $[\mu y x]$, we witness $x \in \Gamma(F)$.
    The only remaining case is (2.), where $\mu x \beta z \in \glang$.
    Here, we simply apply Lemma~\ref{lem:inner-forking} to $\mu x \beta z$ and $\mu y \beta z$ to find $\mu y \beta z x \in \glang$.
    Then, because this case makes $\mu y \beta z \in F$ we conclude that $x \in \Gamma(F)$.
\end{proof}

By applying the Forking Lemma over an inductive structure, we can prove a stronger statement which concerns words rather than single letters.
Namely, repeated applications show the existence of words in $F\sqcup F'/F$ which are constructed from subwords of words in $F/F\sqcap F'$.

\begin{proposition}[Amplified Forking Lemma]\label{prop:amplified-forking}
    Let $\glang$ be a greedoid with the interval property, and select two flats $F, F'\in\glang/\mathord{\sim}$.
    For all $\alpha \in F/F\sqcap F'$, there exists a subword $\alpha'$ of $\alpha$ with $\alpha' \in F\sqcup F'/F$.
\end{proposition}
\begin{proof}
    Induct over $|\alpha|$.
    If $\alpha = \epsilon$, then $F\sqsubseteq F'$.
    So $F=F\sqcap F'$ and $F' = F\sqcup F'$, making $\epsilon \in F/F\sqcap F'$ and $\epsilon \in F\sqcup F'/F'$.
    Now for the inductive step, let $\alpha = \beta x$ and $F_0$ be the flat covered by $F$ which is formed by concatenating any word spanning $F\sqcap F'$ with $\beta$.
    By hypothesis, there exists a subword $\beta'$ of $\beta$ such that $\beta'\in F_0\sqcup F'/F'$.
    If $F_0\sqcup F' = F\sqcup F'$ then we are done,
    so assume $F_0\sqcup F' \neq F\sqcup F'$.
    This makes $F\not\sqsubseteq F_0\sqcup F'$, and so it follows that,
    \begin{equation}\label{eq:igiveup}
        F \sqcap (F_0\sqcup F') = F_0, 
    \end{equation}
    since $F_0 \prec F$, and $F_0$ lies below both $F$ and $F_0\sqcup F'$.
    By substitution $F \succ F \sqcap (F_0\sqcup F')$, so,
    \[
        F_0\sqcup F' \prec F \sqcup (F_0\sqcup F') = F\sqcup F',
    \]
    by semimodularity.
    Observe, because $F_0\sqcup F'$ is rank one less than $F\sqcup F'$, appending any letter in $\Gamma(F_0\sqcup F')\cap\kappa(F\sqcup F')$ to $\beta'$ leads to a word spanning $F\sqcup F'/F'$.
    Well, $\alpha = \beta x$ implies $x \in \kappa(F)$, and Eq.~\ref{eq:igiveup} shows $x$ is a continuation of the meet of $F$ and  $F_0\sqcup F'$ leading to a flat ($F$ to be precise) which is not lesser than $F_0\sqcup F'$.
    Then, by applying the Forking Lemma for $x \in \Gamma(F_0\sqcup F')$, we see that $\alpha'=\beta'x$ gives the result.
\end{proof}

The Forking Lemma combines nicely with optimism to show that flat supports witness the feasibility of eventually appending a letter $y$ onto some extension of a feasible word $\alpha$.
In particular, $y\notin\kappa[\alpha]$ implies the existence of $\beta \in \glang/\alpha$ with $\alpha\beta y$ feasible.
In many ways, this is a generalization of the matroid fact that points beyond the span of an independent set can be used to obtain a larger independent set, and
so is a pleasant example of polymatroid greedoids lying between matroids and general greedoids in terms of their axiomatic power.

\begin{proposition}\label{prop:strong-opt}
    Let $\glang$ be a greedoid with the interval property.
    If $\glang$ is optimistic, then for all $\alpha \in \glang$, $y \notin \kappa[\alpha]$ implies the existence of $\beta \in \glang/\alpha$ with $y\in \widetilde{\beta}$.
\end{proposition}
\begin{proof}
    By optimism, at least one of the following are true:
    There exists an $\alpha$-prefix $\alpha'$ such that $y \in \Gamma[\alpha']$, or there exists $\beta\in\glang/\alpha$ with $y \in \widetilde{\beta}$.
    The latter is our target statement, so suppose the former is true.
    Because $y \notin \kappa[\alpha]$ implies $[\alpha'y] \neq [\alpha]$, it is the case that $[\alpha'y]\sqcap [\alpha] \sqsubset [\alpha'y]$.
    Observe, $[\alpha']$ lies below both $[\alpha'y]$ and $[\alpha]$.
    So, since $[\alpha'y]$ covers $[\alpha']$, it follows that $[\alpha'] = [\alpha' y] \sqcap [\alpha]$.
    Hence, we may apply the Forking lemma to $[\alpha'y]$ and $[\alpha]$ to see $y \in \Gamma[\alpha]$.
    Thus $y\in\glang/\alpha$, so the word $\beta = y$ satisfies the proposition.
\end{proof}

\section{Polymatroid Greedoids and the Greedy Algorithm}\label{sec:characterization}
\begin{algorithm*}[t]
    \DontPrintSemicolon
    \small
    $\alpha\gets\epsilon$\;
    \While{$\Gamma[\alpha]\neq\varnothing$}
    {
        $x \gets \argmin_{y\in\Gamma[\alpha]}w(y)$\;
        $\alpha\gets\alpha x$\;
    }
    \;
    \Return $\alpha$
    \caption{The Greedy Algorithm}\label{alg:greedy}
\end{algorithm*}

Let $w: \univ\to\mathbb{R}$ be a function assigning weights to the alphabet.
Suppose we wish to solve a linear optimization over the basic words of the greedoid as defined by this weight function:
\begin{equation}\label{prog:lin-greedoid}
    \def\arraystretch{1.25}
    \begin{array}{lll}
        \text{minimize}  & \sum_{x \in\widetilde{\alpha}}w(x),\quad & \\
        \text{subject to}& \alpha\in\glang, \quad & \\
                         & r[\alpha] = r(\glang). \quad &
    \end{array}
\end{equation}
This is of course an abstraction of matroid optimization, where the greedoid can be used to encode more complex constraints.
Consider the greedy algorithm:
Maintain a solution $\alpha$, initially $\alpha=\epsilon$, and grow over $r(\glang)$ rounds.
In each round, if $\Gamma[\alpha]=\varnothing$ then terminate.
Else, select a continuation $x$ of minimum weight and repeat on $\alpha x$.
Korte and Lovász showed that the correctness of this approach is logically equivalent to the greedoid being sufficiently ``matroid-like.''

\begin{theorem}[\cite{korte1984greedoids}]\label{thm:greedy}
    Let $\glang$ be a greedoid with rank and basis ranks functions $r,b:2^\univ\to\mathbb{Z}$.
    The greedy algorithm correctly outputs the basic word minimizing $\alpha\mapsto \sum_{x\in\widetilde{\alpha}}w(x)$ for every weighting $w:\univ\to\mathbb{R}$ if and only if $b$ is a matroid rank function such that every $r$-closed set is $b$-closed.
\end{theorem}

In \cite{korte1984greedoids,goetschel1986linear} this is shown to be equivalent to greedoids possessing the \emph{strong-exchange property}, which we use in Appendix~\ref{app:greedy-minors}.
For this reason, we refer to greedoids for which the greedy algorithm is guaranteed to correctly solve Program~\ref{prog:lin-greedoid} \emph{strong exchange greedoids}.
Theorem~\ref{thm:greedy} shows such greedoids have a structure associated with a polymatroid rank function, i.e. its basis rank.
We use this to construct an aligned representation of any local poset greedoid with the strong-exchange property.
This will give the following:

\begin{theorem}\label{thm:lp-greedy=>pm-greedoid}
    Let $\glang$ be a greedoid with the local poset property,
    and suppose the greedy algorithm correctly solves any linear optimization over the basic words.
    Then, there exists an aligned polymatroid representation $\rho:2^\univ\to\mathbb{R}$ of $\glang$.
\end{theorem}

Our proposed representation $\greedyrep$ follows: First, define $b_F:2^\univ\to\mathbb{Z}$ to be the basis rank of the minor $\glang/F$, i.e. $b_F(X)\triangleq \max_{\alpha\in\glang/F}|\widetilde{\alpha}\cap X|.$
Then, $\greedyrep$ is defined as the constrained optimization,
\begin{align*}
    \greedyrep(X) \triangleq \min_{F\in\glang/\mathord{\sim}}\left\{\psi_F(X)\mid (\exists \alpha \in F)\;\widetilde{\alpha}\subseteq X\right\},&&\text{and},&&\psi_F(X) \triangleq r(F) + \left(1+\frac{1}{r(\glang)+1}\right)\cdot b_F(X).
\end{align*}
Observe that the minimization is constrained to flats containing a word supported by letters in $X$.
This assertion and its resulting structure will be required in our technical lemmas.
In this spirit, refer to any flat containing a word supported by $X$ as \emph{$\greedyrep(X)$-feasible}, and call a $\greedyrep(X)$-feasible flat minimizing $F\mapsto \psi_F(X)$ among other feasible flats \emph{$\greedyrep(X)$-optimal}.
Now, see that the cost function $F\mapsto\psi_F(X)$ is the flat's rank ``penalized'' by one's ability to use $X$ to construct $(\glang/F)$-feasible words.
This penalty is encoded by the minor's basis rank, so is minimal (i.e. zero) when $X\subseteq\kappa(F)$ and maximal (i.e $r(\glang)-r(F)$) when $X$ contains the support of a $(\glang/F)$-basic word.
We weigh $b_F(X)$ by a factor $1+\frac{1}{r(\glang)+1}$ greater than one to introduce marginal returns which are not equal to one whenever evaluated against the support of a feasible word $\alpha\in\glang$ on a letter $x\notin\Gamma[\alpha]$.
Furthermore,
\begin{equation}\label{eq:floor-psi}
    \lfloor \psi_F(X) \rfloor = r(F) + b_F(X),\quad\forall (X,F) \in 2^\univ\times\glang/\mathord{\sim},
\end{equation}
as it is always true that $b_F(X) \leq r(\glang)$.

\begin{remark}
    Take note that the basis rank by itself is not strong enough to constitute a representation (whenever it is submodular) since $\alpha\in\glang$ and $(b/\widetilde{\alpha})(x)=1$ does not imply $x \in\Gamma[\alpha]$.
\end{remark}

\begin{figure}
\begin{floatrow}
\capbtabbox[0.5\textwidth]{%
    \centering
    \renewcommand{\arraystretch}{1.25}
    \scalebox{.8}{\begin{tabular}{|c c || c  c|} 
     \hline
     $X$ & $\rho(X)$ & $X$ & $\rho(X)$ \\
     \hline\hline
     $\varnothing$ & 0 & $\{b, c\}$ & $2 + \frac{2}{r(\glang) + 1}$\\[2pt]
     \hline
     $\{a\}$ & 1 & $\{b, d\}$ & $2 + \frac{2}{r(\glang) + 1}$ \\[2pt]
     \hline
     $\{b\}$ & $1 + \frac{1}{r(\glang) + 1}$ & $\{c, d\}$ & $2 + \frac{2}{r(\glang) + 1}$\\[2pt]
     \hline
     $\{c\}$ & $1 + \frac{1}{r(\glang) + 1}$ & $\{a,b,c\}$ & 3 \\[2pt]
     \hline
     $\{d\}$ & $1 + \frac{1}{r(\glang) + 1}$ & $\{a,b,d\}$ & 3 \\[2pt] 
     \hline
     $\{a,b\}$ & $2 + \frac{1}{r(\glang) + 1}$ & $\{a,c,d\}$ & 3 \\[2pt]
     \hline
     $\{a,c\}$ & 2 & $\{b,c,d\}$ & $2 + \frac{2}{r(\glang) + 1}$ \\[2pt]
     \hline
     $\{a,d\}$ & 2 & $\{a,b,c,d\}$ & 3 \\[2pt]
     \hline
\end{tabular}}
}{%
    \caption{Values of $\greedyrep$ when applied to the greedoid in Figure~\ref{fig:ubg}.}\label{table:greedy-rep}%
}\ffigbox[0.5\textwidth]{%
    \centering
    \scalebox{.8}{\begin{tikzpicture}
    \tikzstyle{S}=[rectangle, draw=black, rounded corners=5pt]
    \node[S] (empty) {$\varnothing$};
    \node (inv) [below=of empty, yshift=25pt] {};
    \node[S] (bp) [above left=of empty, xshift=20pt, yshift=-5pt] {$\{b\}$};
    \node[S] (ap) [left=of bp] {$\{a\}$};
    \node[S] (cp) [above right=of empty, xshift=-20pt, yshift=-5pt] {$\{c\}$};
    \node[S] (dp) [right=of cp] {$\{d\}$};

    \node[S] (abp) [above=of ap, yshift=-5pt] {$\{a,b\}$};
    \node[S] (adp) [above=of cp, yshift=-5pt] {$\{a,d\}$};
    \node[S] (acp) [above=of bp, yshift=-5pt] {$\{a,c\}$};
    \node[S] (bcdp) [above=of dp,yshift=-5pt] {$\{b,c,d\}$};
    \node[S] (topp) [above=of empty, yshift=82.5pt] {$\{a,b,c,d\}$};
 
    \foreach \from/\to in {empty/ap, empty/bp,empty/cp,empty/dp,ap/abp,ap/acp, ap/adp, ap/acp, bp/abp,bp/bcdp,cp/acp,cp/bcdp,dp/adp,dp/bcdp, acp/topp, adp/topp, bcdp/topp, abp/topp}
    \draw (\from) -- (\to);
\end{tikzpicture}}
}{%
    \caption{The Hasse diagram of the closed sets\\of $\greedyrep$ from Table~\ref{table:greedy-rep}, ordered by containment.}\label{fig:greedy-rep}%
}
\end{floatrow}
\end{figure}

As a concrete example, we show the values taken by $\greedyrep$ over the undirected branching greedoid from Figure~\ref{fig:ubg} and its lattice of closed sets in Figure~\ref{fig:greedy-rep}.
The reader should compare with Figure~\ref{fig:ubg-rep}.
Observe that $\greedyrep(\widetilde{\alpha}) = \psi_{[\alpha]}(\widetilde{\alpha})$ for every feasible word $\alpha$ in this example, so $x\in\Gamma[\alpha]$ implies $(\greedyrep/\widetilde{\alpha})(x) = 1$ while $x \notin\Gamma[\alpha]$ implies $(\greedyrep/\widetilde{\alpha})(x)$ equals 0 or $1+\frac{1}{r(\glang)+1}$.
This relationship is true in general.

\begin{proposition}\label{prop:greedy-aligned-rep}
    Let $\glang$ be a greedoid with the interval property.
    Suppose the greedy algorithm correctly solves any linear optimization on $\glang$.
    Then, for all feasible words $\alpha \in \glang$ and $x\in\univ$,
    \[
        x \in \Gamma[\alpha] \iff \left(\greedyrep/\widetilde{\alpha}\right)(x) = 1,
    \]
    and $\left(\greedyrep/\widetilde{\alpha}\right)(x) \in \{0\}\cup[1,\infty)$. 
\end{proposition}
\begin{proof}
    First, we show $\greedyrep(\widetilde{\alpha}) = \psi_{[\alpha]}(\widetilde{\alpha})$ for all $\alpha \in \glang$.
    Select any $\greedyrep(\widetilde{\alpha})$-optimal flat, i.e. some $F\in\glang/\mathord{\sim}$ achieving $\greedyrep(\widetilde{\alpha}) = \psi_F(\widetilde{\alpha})$ such that there exists $\beta \in F$ with $\widetilde{\beta}\subseteq\widetilde{\alpha}$.
    Through the exchange axiom there is some $\gamma \in (\widetilde{\alpha}\setminus\widetilde{\beta})!$ such that $\gamma\in\glang/F$.
    Hence,
    $$b_F(\widetilde{\alpha})\geq|\widetilde{\gamma}\cap\widetilde{\alpha}| = |\gamma| = |\alpha| - r(F).$$
    Applying this to the definition of $\psi_F$ and rearranging terms shows,
    $$\psi_{F}(\widetilde{\alpha}) \geq r(F) + \left(1+\frac{1}{r(\glang) + 1}\right)\cdot\left(|\alpha| - r(F)\right) = |\alpha| + \frac{1}{r(\glang) + 1}\cdot\left(|\alpha| - r(F)\right)\geq |\alpha|.$$
    But $\psi_{[\alpha]}(\widetilde{\alpha}) = |\alpha|$, so our assumptions require $\psi_F(\widetilde{\alpha}) \leq |\alpha|$ also hold.
    Thus, $\greedyrep(\widetilde{\alpha}) = \psi_{F}(\widetilde{\alpha}) = |\alpha|$.

    Now fix $x \in \univ$.
    Notice, if $x \in\Gamma[\alpha]$ then $\alpha x$ is feasible.
    So, our preceding argument shows the desired $x \in\Gamma[\alpha]$ implies $\greedyrep(\widetilde{\alpha x}) - \greedyrep(\widetilde{\alpha}) = 1$.
    Then, for the other direction assume $x \notin\Gamma[\alpha]$.
    Once again select a $\greedyrep(\widetilde{\alpha})$-optimal flat $F$, and let $\beta\in F$ be any word with $\widetilde{\beta}\subseteq\widetilde{\alpha}$.
    Observe $x\notin\widetilde{\beta}$, as otherwise the word $\beta'x$ is a $\beta$-prefix for some subword $\beta'$ of $\beta$.
    Then, applying the Forking Lemma to $[\beta' x]$ and $[\alpha]$ (take note that $[\beta']\sqsubseteq [\alpha]$ and $[\beta'x]\succ[\beta']$ implies $[\beta']=[\alpha]\sqcap[\beta'x]$) shows $x\in \Gamma[\alpha]$, a contradiction.
    Hence $\widetilde{\beta}\subseteq\widetilde{\alpha}$, which implies $[\beta]\sqsubseteq [\alpha]$.
    Then, recalling that our assumptions about the greedy algorithm ensure that $b$ satisfies the law of diminishing returns, by way of Lemma~\ref{prop:bf} one finds $\left(b_{F}/\widetilde{\alpha}\right)(x)=\left(b_{[\beta]}/\widetilde{\alpha}\right)(x) \geq \left(b_{[\alpha]}/\widetilde{\alpha}\right)(x)$.
    So by combining this with $[\alpha]$ being $\greedyrep(\widetilde{\alpha})$-optimal,
    \begin{multline*}
        \psi_{F}(\widetilde{\alpha}+ x) = r(F) + \left(1 + \frac{1}{r(\glang) + 1}\right)\cdot b_F(\widetilde{\alpha}) +\left(1 + \frac{1}{r(\glang) + 1}\right)\cdot \left(b_{F}/\widetilde{\alpha}\right)(x),\\\geq \psi_{[\alpha]}(\widetilde{\alpha}) + \left(1 + \frac{1}{r(\glang) + 1}\right)\cdot \left(b_{F}/\widetilde{\alpha}\right)(x) = \psi_{[\alpha]}(\widetilde{\alpha}+x).
    \end{multline*}
    Thus $[\alpha]$ is $\greedyrep(\widetilde{\alpha}+x)$-optimal as well.
    So, as $\left(\greedyrep/\widetilde{\alpha}\right)(x) = \psi_{[\alpha]}(\widetilde{\alpha}+x) - \psi_{[\alpha]}(\widetilde{\alpha})$, we see $x\notin\Gamma[\alpha]$ implies $\left(\greedyrep/\widetilde{\alpha}\right)(x) \neq 1$ by,
    \begin{equation*}
        \psi_{[\alpha]}(\widetilde{\alpha}+x) - \psi_{[\alpha]}(\widetilde{\alpha}) = \begin{cases}
            1 + \frac{1}{r(\glang) + 1},\quad&\text{if }b_{[\alpha]}(x) \neq 0,\\
            0,&\text{otherwise}.
        \end{cases}
    \end{equation*}
    And we've already seen $\left(\greedyrep/\widetilde{\alpha}\right)(x) = 1$ whenever $x \in\Gamma[\alpha]$, so $\left(\greedyrep/\widetilde{\alpha}\right)(x) \in \{0\}\cup[1,\infty)$ holds too.
\end{proof}

The previous proposition will be used to show $\greedyrep$ is an aligned representation whenever $\greedyrep$ is a polymatroid rank function.
Since $\greedyrep$ is clearly normalized, we need only show monotonicity and submodularity.
To do so, we begin in Section~\ref{subsec:bf-rank} by proving facts about the function $b_F$.
Here, our main insight is that the basis rank $b_F$ of the minor $\glang/F$ can be equated with the (polymatroidal) contractions $b/\kappa(F)$.
With this, we will leverage the law of diminishing returns when analyzing $\greedyrep$.
This analysis occurs in Section~\ref{subsec:greedy-aligned-rep}, where we complete the proof of Theorem~\ref{thm:lp-greedy=>pm-greedoid} by verifying $\greedyrep$ is monotone and submodular.

\subsection{The Basis Rank of the Contraction Minors of Interval Greedoids}\label{subsec:bf-rank}
Our main result of this section is the following, equating the values of $b_F$ to those of $b/\kappa(F)$.
\begin{proposition}\label{prop:bf}
    Let $\glang$ be a greedoid with the interval property.
    Then, $b_F = b/\kappa(F)$ everywhere.
\end{proposition}
\begin{proof}
    Select an arbitrary set of letters $X\subseteq \univ$.
    It should be clear that $b_F(X) \leq (b/\kappa(F))(X)$.
    For the reverse inequality, take a subset $Y\subseteq X$ such that $(b/\kappa(F))(X) = (b/\kappa(F))(Y) = |Y|$.
    Such a $Y$ must exist:
    Because $b$ is subcardinal, there must be a sequence $y_1,\ldots, y_k\in X$ wsuch that $k = (b/\kappa(F))(X)$ and $(b/\kappa(F) \cup \{y_1,\ldots,y_{i-1}\})(x_i) = 1$ for all $i \in \{1,\ldots, k\}$.
    Hence,
    \[
        k = (b/\kappa(F))(\{y_1,\ldots,y_k\}) = (b/\kappa(F))(X),
    \]
    and so we can let $Y = \{y_1,\ldots, y_k\}$.
    Now, notice that by subcardinality we must have $(b/\kappa(F))(Y-z) = |Y|-1$ for all $z \in Y$.
    Hence, for any $z \in Y$,
    \[
        b(\kappa(F)\cup Y) - b(\kappa(F)\cup Y - z) = 1,
    \]
    and so $z$ lies in the support of any feasible word $\alpha \in \glang$ such that $|\widetilde{\alpha} \cap (\kappa(F)\cup Y)| = b(\kappa(F) \cup Y)$.
    However, as our choice of $z\in Y$ was arbitrary, this means,
    \[
        \alpha\in\glang\text{ and }|\widetilde{\alpha} \cap (\kappa(F)\cup Y)| = b(\kappa(F) \cup Y) \implies Y \subseteq \widetilde{\alpha}.
    \]

    With that, select a basic word $\beta \in \glang$ achieving $|\widetilde{\beta} \cap (\kappa(F)\cup Y)| = b(\kappa(F) \cup Y)$.
    Then, from $Y\subseteq \widetilde{\beta}$ and $(b/\kappa(F))(Y) = |Y|$ we observe,
    \begin{equation}\label{eq:2345r}
        |\widetilde{\beta}\cap \kappa(F)| = b(\kappa(F)\cup Y) - |Y| =b(\kappa(F)\cup Y) - (b/\kappa(F))(Y) =  b(\kappa(F)) = r(F).
    \end{equation}
    Or put in other terms, there are exactly $r(F)$ letters from $\kappa(F)$ in the support of $\beta$.
    Well, for any $\gamma \in F$, it follows by exchange that there is a simple word $\pi \in (\widetilde{\beta} \setminus \kappa(F))^*$ which can be appended onto $\gamma$ to make a basic word $\gamma\pi \in \glang$.
    However, the support of $\pi$ cannot overlap with $\kappa(F)$, as $\pi \in \glang/\gamma=\glang/F$ by construction.
    By this and Eq.~\ref{eq:2345r}, there are exactly $r(\glang)-r(F)$ to choose from $\widetilde{\beta}\setminus\kappa(F)$ to form $\pi$.
    But, because $|\gamma| = r(F)$ and $\gamma\pi$ is basic we also have that $|\pi| = r(\glang)-r(F)$.
    Therefore $\widetilde{\pi} = \widetilde{\beta}\setminus\kappa(F)$, which implies $Y \subseteq \widetilde{\pi}$.
    Then by recalling $\pi \in  \glang/F$ and $Y\subseteq X$,
    \[
        b_F(X) \geq b_F(Y) \geq |\widetilde{\pi} \cap Y| = |Y| = (b/\kappa(F))(X),
    \]
    and we conclude $b_F(X) = (b/\kappa(F))(X)$.
\end{proof}

By definition, this implies $b_F(X) = b(\kappa(F)\cup X) - b(\kappa(F))$.
But, because $\kappa(F)$ is a partial alphabet, $b(\kappa(F)) = r(\kappa(F))$.
This produces a helpful corollary.
\begin{corollary}\label{cor:bf-eq}
    Let $\glang$ be a greedoid with the interval property.
    Then, for all subsets $X\subseteq\univ$ and flats $F\in\glang/\mathord{\sim}$, we have $b_{F}(X) = b(\kappa(F)\cup X) - r(F)$.
\end{corollary}

\subsection{An Aligned Representation of Greedily Optimizable Local Poset Greedoids}\label{subsec:greedy-aligned-rep}

Recall, Proposition~\ref{prop:greedy-aligned-rep} shows $\greedyrep$ can be used to encode the feasible words of a greedoid à la polymatroid representation.
We must verify $\greedyrep$ is a polymatroid rank function.
A key challenge is that because $\greedyrep$ is defined as a constrained optimization over $\L(\glang)$, it is not easily evaluated on arbitrary sets of letters.
And so, as a preview, we develop technical lemmas for identifying $\greedyrep(X)$-optimal flats.

First, recall that $\lfloor\psi_F(X)\rfloor = r(F)+b_F(X)$, so it follows that any $\greedyrep(X)$-optimal flat minimizes $r(F)+b_F(X)$ among other $\greedyrep(X)$-feasible flats.
In the following two lemmas, we identify that a $\greedyrep(X)$-optimal flat actually minimizes $r(F)+b_F(X)$ among \emph{all} flats, and that the minimizers of this function form a sublattice of $\L(\glang)$.
This latter fact will provide an algebraic structure with which we can observe important properties of $\greedyrep(X)$-optimal flats.

\begin{lemma}\label{lem:opt-min-floor}
    Let $\glang$ be a greedoid with the interval property, and $X\subseteq \univ$ any collection of letters.
    Then, $\lfloor\greedyrep(X)\rfloor = \min\left\{r(F)+b_F(X)\mid (\exists \alpha \in F)\;\widetilde{\alpha}\subseteq X\right\}$.
\end{lemma}
\begin{proof}
    Recall $\lfloor\psi_F(X)\rfloor = r(F)+b_F(X)$ for all $F\in\glang/\mathord{\sim}$.
    It follows then that $r(F)+b_F(X) < r(F')+b_{F'}(X)$ implies $\psi_F(X) < \psi_{F'}(X)$.
    Hence, we need only show,
    \begin{equation}\label{eq:fjfkll}
        \{F\in\glang/\mathord{\sim}\mid(\exists\alpha\in F)\;\widetilde{\alpha}\subseteq X\}\cap\left(\argmin_{F\in\glang/\mathord{\sim}}r(F)+b_F(X)\right) \neq \varnothing.
    \end{equation}
    Well, via Corollary~\ref{cor:bf-eq} we have $b(\kappa(F)\cup X) = r(F) + b_F(X)$.
    Furthermore, monotonicity of the basis rank makes $b(X) \leq b(\kappa(F)\cup X)$, and for the bottom flat $[\epsilon]$ it should be clear that $r[\epsilon] + b_{[\epsilon]}(X) = b(X)$.
    Combining these observations together shows,
    \[
        r[\epsilon] + b_{[\epsilon]}(X) \leq r(F) + b_F(X),\quad\forall F\in\glang/\mathord{\sim},
    \]
    and so because the empty word trivially satisfies $\widetilde{\epsilon}\subseteq X$, the bottom flat $[\epsilon]$ witness Eq.~\ref{eq:fjfkll}.
\end{proof}

\begin{lemma}\label{lem:floor-psi-min-lattice}
    Let $\glang$ be a greedoid with the interval property, and $X\subseteq \univ$ any collection of letters.
    Suppose the greedy algorithm correctly solves any linear optimization over the basic words of $\glang$.
    Then, the set of flats minimizing $F\mapsto r(F) + b_F(X)$ forms a sublattice of $\L(\glang)$.
\end{lemma}
\begin{proof}
    This essentially follows by showing $F\mapsto r(F) + b_F(X)$ is a submodular function over $\L(\glang)$.
    Once submodularity is shown, a standard technique for showing that the minimizers form a distributive lattice can be applied (see \cite{queyranne1998minimizing} for a working example over a ring of sets).

    Select two flats $F,F'\in\glang/\mathord{\sim}$.
    To show submodularity, first observe because $\kappa(F)\cup\kappa(F')$ is a partial alphabet, it follows that $b(\kappa(F)\cup\kappa(F')) = r(\kappa(F)\cup\kappa(F'))$.
    However, $r(Y) = r(\kappa(Y))$ for all $Y\subseteq \univ$, and $\kappa(\kappa(F)\cup\kappa(F')) = \kappa(F\sqcup F')$ since $\kappa(\kappa(F)\cup\kappa(F'))$ is the join of $\kappa(F)$ and $\kappa(F')$ in the lattice of flat supports.
    Thus,
    \begin{equation}\label{eq:783473}
        \left(b/\kappa(F)\cup\kappa(F')\right)(\kappa(F\sqcup F')) = 0.
    \end{equation}
    Now, our assumptions make $b$ a matroid rank function.
    We leverage this with Corollary~\ref{cor:bf-eq},
    \begin{equation*} 
        \def\arraystretch{1.25}
        \begin{array}{lll}
            r(F) \!\!\!\!&+\; r(F') + b_F(X) + b_{F'}(X) = b(\kappa(F)\cup X) + b(\kappa(F') \cup X),\quad&\text{by Corollary~\ref{cor:bf-eq}},\\
                         &\geq b((\kappa(F)\cap\kappa(F'))\cup X) + b(\kappa(F)\cup\kappa(F')\cup X),\quad&\text{by submodularity},\\
                         &\geq b((\kappa(F)\cap\kappa(F'))\cup X) + b(\kappa(F\sqcup F')\cup X),\quad&\text{by Eq.~\ref{eq:783473}},\\
                                                      &= b_{F\sqcap F'}((\kappa(F)\cap\kappa(F'))\cup X) + b_{F\sqcup F'}(X) + r(F\sqcap F') + r(F\sqcap F'),\quad&\text{by Corollary~\ref{cor:bf-eq}},\\
                                                      &\geq b_{F\sqcap F'}(X) + b_{F\sqcup F'}(X) + r(F\sqcap F') + r(F\sqcup F'),\quad&\text{by monotonicity}.
        \end{array}
    \end{equation*}
    So we have submodularity over $\L(\glang)$.
    It is easy now to show that the minimizers form a sublattice: Suppose $F$ and $F'$ additionally satisfy,
    \begin{equation}\label{eq:min-assump}
        r(F) + b_F(X) = r(F') + b_{F'}(X)\text{ and }(\forall F'' \in \glang/\mathord{\sim})\;r(F) + b_F(X) \leq r(F'') + b_{F''}(X). 
    \end{equation}
    Well, by the averaging principle,
    \[
        \min\{r(F\sqcap F') + b_{F\sqcap F'}(X), r(F\sqcup F') + b_{F\sqcup F'}(X)\} \leq r(F) + b_F(X),
    \]
    which implies $\min\{r(F\sqcap F') + b_{F\sqcap F'}(X), r(F\sqcup F') + b_{F\sqcup F'}(X)\} = r(F) + b_F(X)$ by Eq.~\ref{eq:min-assump}.
    Therefore, to maintain the submodular law,
    \begin{equation*}
        \max\{r(F\sqcap F') + b_{F\sqcap F'}(X), r(F\sqcup F') + b_{F\sqcup F'}(X)\} \leq r(F') + b_{F'}(X).
    \end{equation*} 
    And so, $F\sqcap F'$ and $F\sqcup F'$ are also minimizers.
\end{proof}

Now we can make a number of observations regarding $\greedyrep(X)$-optimal flats.
These include uniqueness and monotonicity, among other properties.

\begin{lemma}\label{lem:big-stick}
    Let $\glang$ be a greedoid with the interval property, and fix $X,Y\subseteq \univ$ and $z\in\univ$.
    Suppose the greedy algorithm correctly solves any linear optimization over the basic words of $\glang$,
    and assume $F_X,F_{X+z}$, and $F_Y$ are $\greedyrep(X)$, $\greedyrep(X+z)$, and $\greedyrep(Y)$-optimal flats respectively.
    The following are true:
    \begin{enumerate}
        \item $F_X$ is the unique $\greedyrep(X)$-optimal flat,
        \item If $X\subseteq Y$, then $F_X \sqsubseteq F_Y$,
        \item $X\cap \Gamma(F_X) = \varnothing$,
        \item If $F_X \sqsubset F_{X+z}$, then there exists $\alpha \in (X\setminus\kappa(F_X))^*$ such that $z\alpha \in F_{X+z}/F_X$.
    \end{enumerate}
\end{lemma}
\begin{proof}
    Now, for 1., let $F_X'$ be another $\greedyrep(X)$-optimal flat.
    Lemma~\ref{lem:opt-min-floor} implies that both $F_X$ and $F_X'$ minimize $F\mapsto\lfloor\psi_F(X)\rfloor$, and so
    through Lemma~\ref{lem:floor-psi-min-lattice} we observe the join $F_X\sqcup F_X'$ minimizes this function as well.
    Without loss of generality, we assume either $F_X$ and $F_X'$ are incomparable or $F_X$ and $F_X'$ can be ordered as $F_X\sqsubseteq F_X'$.
    Hence, as some $\alpha\in F_X$ and $\beta \in F_X'$ such that $\widetilde{\alpha}\cup\widetilde{\beta}\subseteq X$ are guaranteed to exist by $\greedyrep(X)$-feasibility, we apply the Amplified Forking Lemma to witness the existence of a subword $\beta'$ of $\beta$ such that $\beta'\in F_X\sqcup F_X'/F_X$.
    Thus, $\alpha\beta'\in F_X\sqsubseteq F_X'$ shows that the join is $\greedyrep(X)$-feasible as well.
    Now, because $b_{F_X}(X) \geq |\widetilde{\beta'\gamma}\cap X|$ and $\widetilde{\beta}'\cap\widetilde{\gamma} = \varnothing$ for every $\gamma \in F_X\sqcup F_X'$, by letting selecting such a $\gamma$ satisfy $b_{F_X\sqcup F_X'}(X) =|\widetilde{\gamma}\cap X|$ (and recalling $\widetilde{\beta}'\subseteq\widetilde{\beta}\subseteq X$),
    \begin{equation}\label{eq:57ty}
        b_{F_X}(X) - b_{F_X\sqcup F_X'}(X) \geq |\widetilde{\beta'\gamma}\cap X| - |\widetilde{\gamma}\cap X| = |\beta'|.
    \end{equation}
    Therefore,
    \[
        \psi_{F_X}(X) - \psi_{F_X\sqcup F_X'}(X) = \lfloor\psi_{F_X}(X)\rfloor - \lfloor\psi_{F_X\sqcup F_X'}(X)\rfloor + \frac{b_{F_X}(X) - b_{F_X\sqcup F_X'}(X)}{r(\glang) + 1} \geq \frac{|\beta'|}{r(\glang) + 1}.
    \]
    Optimality requires this equal zero, forcing $|\beta'| = 0$.
    Thus $F_X = F_X\sqcup F_X'$, so $F_X$ and $F_X'$ are comparable as $F_X'\sqsubseteq F_X$.
    But, as comparability implies $F_X\sqsubseteq F_X'$ by earlier assumptions, $F_X = F_X'$.

    Next up we examine 2.
    Let $\beta \in F_X$ with $\widetilde{\beta}\subseteq X$.
    One witnesses a subword $\beta'$ of $\beta$ with $\beta'\in F_X\sqcup F_Y/F_Y$ by way of the Amplified Forking Lemma.
    Select $\alpha \in F_Y$ with $\widetilde{\alpha}\subseteq Y$.
    Then it is clear that $\alpha\beta'\in F_X\sqcup F_Y$ and $\widetilde{\alpha\beta'} \in F_X\sqcup F_Y$, thereby making the join $\greedyrep(Y)$-feasible.
    By repeating the same logic underlying Eq.~\ref{eq:57ty}, one can observe the bound,
    $$b_{F_Y}(Y) - b_{F_X\sqcup F_Y}(Y) \geq |\beta'| = r(F_X\sqcup F_Y) - r(F_Y).$$
    However, we also have $b_{F_Y}(Y) - b_{F_X\sqcup F_Y}(Y) \leq r(F_X\sqcup F_Y) - r(F_Y)$:
    Take a $\gamma \in \glang/F_Y$ with $b_{F_Y}(Y) = |\widetilde{\gamma}\cap Y|$.
    By the exchange axiom, there is a word $\gamma'\in \glang/F_X\sqcup F_Y$ of length $r(\glang) - r(F_X\sqcup F_Y)$ supported on a subset of $\widetilde{\gamma}$.
    This implies there are at most $r(F_X\sqcup F_Y) - r(F_Y)$ letters in $\widetilde{\gamma}\cap Y$ which are not contained in $\widetilde{\gamma}'\cap Y$.
    Hence,
    \[
        r(F_X\sqcup F_Y) - r(F_Y) \geq |\widetilde{\gamma}\cap Y| - |\widetilde{\gamma}'\cap Y| \geq b_{F_Y}(Y) - b_{F_X\sqcup F_Y}(Y).
    \]
    So, we've identified $b_{F_Y}(Y) - b_{F_X\sqcup F_Y}(Y) = r(F_X\sqcup F_Y) - r(F_Y)$.
    This makes,
    \[
        \psi_{F_X\sqcup F_Y}(Y) - \psi_{F_Y}(Y) = r(F_X\sqcup F_Y) - r(F_Y) + \left(1+\frac{1}{r(\glang) + 1}\right)\cdot\left(r(F_Y) - r(F_X\sqcup F_Y)\right) \leq 0,
    \]
    and so $F_X\sqcup F_Y$ is also $\greedyrep(Y)$-optimal.
    By 1., it can only be that $F_Y = F_X\sqcup F_Y$, making $F_X\sqsubseteq F_Y$.

    Now, we'll show 3. and 4. together.
    For 4. assume $F_X\sqsubset F_{X+z}$.
    Let $\beta \in F_{X+z}$ be such that $\widetilde{\beta}\subseteq X+z$.
    By the Amplified Forking Lemma there is a subword $\beta'$ of $\beta$ such that $\beta'\in F_{X+z}/F_X$.
    Notice that 3. and 4. follow if $\Gamma(F_X) \cap X = \varnothing$.
    Specifically, this is a restatement of 3., and it would force $\beta' = z\alpha$ for some $\widetilde{\alpha}\subseteq X$ by pigeonhole principle.
    So by way of contradiction, assume there exists $y \in \Gamma(F_X) \cap X$.
    For convenience, let $F_0 \triangleq \kappa^{-1}(\kappa(F_X)+y)$
    Notice, by fixing $\gamma \in \glang/F_0$ with $b_{F_0}(X) = |\widetilde{\gamma}\cap X|$, since $y\gamma \in F_X$ we have,
    \[
        b_{F_X}(X) - b_{F_0}(X) \geq |\widetilde{y\gamma}\cap X| - |\widetilde{\gamma}\cap X| = 1.
    \]
    Thus,
    \begin{multline*}
        \psi_{F_X}(X) - \psi_{F_0}(X) = r(F_X) - r(F_0) + \left(1+\frac{1}{r(\glang)+1}\right)\cdot\left(b_{F_X}(X) - b_{F_0}(X)\right),\\\geq r(F_X) - (r(F_X) + 1) + \left(1+\frac{1}{r(\glang)+1}\right)=\frac{1}{r(\glang)+1},
    \end{multline*}
    and so $\psi_{F_0}(X) < \psi_{F_X}(X)$.
    It is easy to see that $F_0$ is $\greedyrep(X)$-feasible (since we can append $y$ onto the end of any word in $F_X$ to achieve a word spanning $F_0$, and $y\in X$).
    So we've found that $F_X$ is not $\greedyrep(X)$-optimal, a contradiction.
    As discussed this witnesses 3. and 4.,
    and so concludes the lemma.
\end{proof}

In what follows we will continue to refer to the $\greedyrep(X)$-optimal flat by $F_X$.
Lemma~\ref{lem:big-stick}.1 justifies this.
Now, with Lemma~\ref{lem:big-stick}.2 it is straightforward to show that $\greedyrep$ satisfies monotonicity.

\begin{proposition}\label{prop:gr-monotonicity}
    Let $\glang$ be a greedoid with the interval property.
    Suppose the greedy algorithm correctly solves any linear optimization over the basic words of $\glang$.
    Then, for all $X,Y\subseteq\univ$, $X\subseteq Y$ implies $\greedyrep(X) \leq \greedyrep(Y)$.
\end{proposition}
\begin{proof}
    First we show $b(Z) = b(Z\cap \kappa({F_Z})) + b_{F_Z}(Z)$ for arbitrary $Z\subseteq\univ$.
    Our assumptions make the basis rank $b$ submodular.
    So,
    \begin{equation}\label{eq:dddss}
        b(Z) = b(Z \cap \kappa({F_Z})) + (b/Z \cap \kappa({F_Z}))(Z).
    \end{equation}
    Because ${F_Z}$ is $\greedyrep(Z)$-feasible the existence of $\alpha \in Z^*$ such that $\alpha \in {F_Z}$ follows.
    This construction implies $(b/Z \cap \kappa({F_Z}))(Z) = (b/\kappa({F_Z}))(Z)$ since by the law of diminishing returns,
    \[
        (b/\widetilde{\alpha})(Z) \geq (b/Z \cap \kappa({F_Z}))(Z) \geq (b/\kappa({F_Z}))(Z) = (b/\widetilde{\alpha})(Z).
    \]
    Thus we see,
    \begin{equation}\label{eq:bZ=sum}
        b(Z) = b(Z\cap \kappa({F_Z})) + b_{F_Z}(Z),
    \end{equation}
    by substituting $(b/Z \cap \kappa({F_Z}))(Z) = (b/\kappa({F_Z}))(Z)$ into Eq.~\ref{eq:dddss}.
    Since $b(Z\cap\kappa(F_Z)) = r(F_Z)$ by way of $\greedyrep(Z)$-feasibility, we observe $\lfloor \psi_{F_Z}(Z)\rfloor = b(Z)$ as a consequence.
    Finally, if $F \in \glang/\mathord{\sim}$ is such that $F\sqsubseteq F_Z$ then applying this argument to the minor $\glang/F$ shows,
    \[
        F\sqsubseteq F_Z \implies b_F(Z) = b_F(Z\cap\kappa({F_Z}) + b_{F_Z}(Z).
    \]
    Take note that $F_X \sqsubseteq F_Y$ by Lemma~\ref{lem:big-stick}.2, so $b_{F_X}(Y) = b_{F_X}(Y\cap\kappa(F_Y)) + b_{F_Y}(Y)$.
    However, because $F_Y$ is $\greedyrep(Y)$-feasible, $b_{F_X}(Y\cap\kappa(F_Y)) = r(F_Y) - r(F_X)$.
    Therefore,
    \begin{equation}\label{eq:ifyfzf}
        \lfloor \psi_{F_X}(Y)\rfloor = r(F_X) + b_{F_X}(Y) = r(F_X) - r(F_X) + r(F_Y) + b_{F_Y}(Y) = \lfloor \psi_{F_Y}(Y) \rfloor,
    \end{equation}
    i.e. $\lfloor \psi_{F_X}(Y)\rfloor = \lfloor \psi_{F_Y}(Y) \rfloor$.
    This and Eq.~\ref{eq:bZ=sum} is used in what follows.

    Now first suppose $F_X = F_Y$.
    Then $\greedyrep(X) \leq \greedyrep(Y)$ follows from $X\subseteq Y$, $r(F_X) = r(F_Y)$, and $b_{F_X} = b_{F_Y}$.
    In the other case, where $F_X \neq F_Y$, we see $F_X \sqsubset F_Y$ by Lemma~\ref{lem:big-stick}.2.
    So there exists a nonempty $z\beta \in F_Y/F_X$ with $\widetilde{z\beta}\subseteq Y$.
    This makes $z \in \Gamma(F_X)$,
    and by way of Lemma~\ref{lem:big-stick}.3 we see $z \notin X$.
    Now, we claim that any $\beta \in F_X$ such that $\widetilde{\beta}\subseteq X$ is such that $|\beta| = r(X)$, which makes $r(X) = r(F_X)$.
    Suppose this weren't true.
    Let $\gamma\in\glang$ be a longest word with $\widetilde{\gamma}\subseteq X$.
    Then since $\widetilde{\gamma}$ is independent in the matroid induced by $b$, it follows by exchange that $b(\widetilde{\gamma}) + (b/\widetilde{\gamma})(X) = b(X)$.
    Hence, as $b(\widetilde{\gamma}) = r[\gamma]$ and $b/\widetilde{\gamma} = b/\kappa[\gamma]$, we see $\lfloor \psi_{[\gamma]}(X)\rfloor = \lfloor\psi_{F_X}(X)\rfloor$ by way of Eq.~\ref{eq:bZ=sum}.
    But $r[\gamma] > r(F_X)$ by our assumptions, and so $\psi_{[\gamma]}(X) < \psi_{F_X}(X)$ which contradicts the optimality of $F_X$.
    So we have shown $r(X) = r(F_X)$.
    Since $z \in \Gamma(F_X)$, this implies $z \notin \sigma_r(X)$.
    By Theorem~\ref{thm:greedy}, we have that $\sigma_r(X)$ is $b$-closed.
    Then $(b/\sigma_{r}(X))(z) = 1$, and so by $z \in Y$ and the law of diminishing returns (using $\kappa(F_X)\subseteq \sigma_r(X)$ and Proposition~\ref{prop:bf}) we observe $(b_{F_X}/X)(Y) > 0$.
    The upshot is that this implies $\psi_{F_X}(Y) > \psi_{F_X}(X)$.
    Well, because the increments of $\psi_{F_X}$ are greater than or equal to one, this also makes $\lfloor\psi_{F_X}(Y)\rfloor > \lfloor\psi_{F_X}(X)\rfloor$.
    Therefore, by applying Eq.~\ref{eq:ifyfzf} we observe,
    \[
        \lfloor \psi_{F_Y}(Y)\rfloor = \lfloor\psi_{F_X}(Y)\rfloor > \lfloor\psi_{F_X}(X)\rfloor.
    \]
    But $\lfloor \psi_{F_Y}(Y)\rfloor > \lfloor\psi_{F_X}(X)\rfloor$ implies $\psi_{F_Y}(Y) > \psi_{F_X}(X)$.
    So monotonicity follows in all cases.
\end{proof}

What remains is submodularity.
Our strategy will be to show the law of diminishing returns.
This analysis starts with the following technical lemma.

\begin{lemma}
    Let $\glang$ be a greedoid with the interval property, and fix $X\subseteq \univ$ and $z \in \univ$.
    Suppose the greedy algorithm correctly solves any linear optimization over the basic words of $\glang$.
   Then, $F_X\sqsubset F_{X+z}$ implies $r(F_X) + b_{F_X}(X) + 1 = r(F_{X+z}) + b_{F_{X+z}}(X+z)$.
\end{lemma}
\begin{proof}
    Because $F_{X+z}$ is $\greedyrep(X+z)$-optimal, we require $\lfloor\psi_{F_X}(X+z)\rfloor \geq \lfloor \psi_{F_{X+z}}(X+z)\rfloor$.
    $F_X\sqsubset F_{X+z}$ implies $\left(b_{F_X}/X\right)(z) = 1$, so plugging in $\lfloor\psi_{F_X}(X+z)\rfloor = r(F_X) + b_{F_X}(X+z)$ and $\lfloor\psi_{F_{X+z}}(X+z)\rfloor = r(F_{X+z}) + b_{F_{X+z}}(X+z)$ this makes,
    \begin{equation}\label{eq:floor-psi-x-xz}
        r(F_X) + b_{F_X}(X) + 1 \geq r(F_{X+z}) + b_{F_{X+z}}(X+z).
    \end{equation}
    Assume the inequality is strict.
    Then, $r(F_{X+z}) + b_{F_{X+z}}(X+z) \leq r(F_X) + b_{F_X}(X)$ as $\left(b_{F_X}/X\right)(z) = 1$.
    However, $\lfloor\psi_{F_{X+z}}(X+z)\rfloor \geq \lfloor\psi_{F_X}(X)\rfloor$ must hold to maintain the monotonicity of $\greedyrep$, so we find $r(F_{X+z}) + b_{F_{X+z}}(X+z) = r(F_X) + b_{F_X}(X)$, or equivalently $\lfloor \psi_{F_{X+z}}(X+z) \rfloor = \lfloor \psi_{F_X}(X)\rfloor$.
    Because $F_X\sqsubset F_{X+z}$, $r(F_X) < r(F_{X+z})$.
    This makes $b_{F_X}(X) > b_{F_{X+z}}(X+z)$.
    Hence,
    \[
        \greedyrep(X) = \lfloor \psi_{F_X}(X)\rfloor + \frac{b_{F_X}(X)}{r(\glang)+1} < \lfloor \psi_{F_{X+z}}(X+z) \rfloor + \frac{b_{F_{X+z}}(X+z)}{r(\glang)+1} = \greedyrep(X+z),
    \]
    which contradicts the monotone structure of $\greedyrep$ shown in Proposition~\ref{prop:gr-monotonicity}.
    And so we conclude the desired $r(F_X) + b_{F_X}(X) + 1 = r(F_{X+z}) + b_{F_{X+z}}(X+z)$.
\end{proof}

By rewriting terms, the prior lemma shows both $r(F_{X+z}) - r(F_X) + b_{F_{X+z}}(X+z) - b_{F_X}(X)$ and $r(F_{Y+z}) - r(F_Y) + b_{F_{Y+z}}(Y+z) - b_{F_Y}(Y)$ equal one.
This implies,
\begin{equation}\label{eq:key1}
    r(F_{X+z}) - r(F_X) + b_{F_{X+z}}(X+z) - b_{F_X}(X) = r(F_{Y+z}) - r(F_Y) + b_{F_{Y+z}}(Y+z) - b_{F_Y}(X),
\end{equation}
which through Lemma~\ref{lem:opt-min-floor} is equivalent to $\lfloor \psi_{F_{X+z}}(X+z)\rfloor - \lfloor \psi_{F_X}(X)\rfloor = \lfloor \psi_{F_{Y+z}}(Y+z)\rfloor - \lfloor \psi_{F_Y}(Y)\rfloor$.
Then, keeping in mind that,
\begin{align*}
    \left(\greedyrep/X\right)(z) = \lfloor \psi_{F_{X+z}}(X+z)\rfloor - \lfloor \psi_{F_X}(X)\rfloor + \frac{b_{F_{X+z}}(X+z) - b_{F_X}(X)}{r(\glang)+1},
\end{align*}
while a similar expression holds for $\left(\greedyrep/Y\right)(z)$,
the law of diminishing returns will follow if,
\begin{equation}\label{eq:key2}
    b_{F_{X+z}}(X+z) - b_{F_X}(X) \geq b_{F_{Y+z}}(Y+z) - b_{F_Y}(Y).
\end{equation}
Via Eq.~\ref{eq:key1}, this is implied if $r(F_{Y+z}) - r(F_Y) \geq r(F_{X+z}) - r(F_X)$.
This is our key insight, proven in what follows whenever $\left(\greedyrep/Y\right)(z)\neq 0$.
We remark that the coming argument is where we fully leverage our assumptions of the local poset property and the correctness of the greedy algorithm.

\begin{lemma}\label{lem:key-lemma}
    Let $\glang$ be a greedoid with the local poset property.
    Suppose the greedy algorithm correctly solves any linear optimization over the basic words of $\glang$.
    Then, for all $X,Y\subseteq\univ$ and $z \in \univ$ such that $X\subseteq Y$ and $z \notin Y$,
    $\left(\greedyrep/Y\right)(z)\neq 0$ implies $r(F_{Y+z}) - r(F_Y) \geq r(F_{X+z}) - r(F_X)$.
\end{lemma}
\begin{proof}
    Let $F_X \neq F_{X+z}$, as otherwise $r(F_{X+z}) - r(F_X) = 0$ and the lemma follows trivially.
    Then, by Lemma~\ref{lem:big-stick}.4 there exists $z\alpha\in F_{X+z}/F_X$ satisfying $\widetilde{\alpha}\subseteq X$.
    We prove the lemma by verifying $z\alpha \in \glang/F_Y$.
    To see why this suffices, observe that $z\alpha\in\glang/F_Y$ implies,
    \begin{equation*}
        \def\arraystretch{1.25}
        \begin{array}{lll}
            \kappa(F_Y)\cup\widetilde{z\alpha} &\subseteq \kappa\left(\kappa(F_Y)\cup\widetilde{z\alpha}\right),\quad&\text{because  $\kappa(F_Y)\cup\widetilde{z\alpha}$ is a partial alphabet},\\
                                               &\subseteq \kappa\left(\kappa(F_X)\cup\widetilde{z\alpha}\cup\kappa(F_Y)\right),\quad&\text{by Lemma~\ref{lem:big-stick}.2},\\
                                               &\subseteq \kappa\left(\kappa(F_{X+z})\cup\kappa(F_Y)\right),\quad&\text{by $z\alpha\in F_{X+z}/F_X$},\\
                                               &= \kappa(F_{X+z}\sqcup F_Y),
        \end{array}
    \end{equation*}
    and so because one can examine Eq.~\ref{eq:kappa-inv} to see $\kappa^{-1}:\left(2^\univ,\subseteq\right)\to\L(\glang)$ is order-preserving, we find $\kappa^{-1}(\kappa(F_Y)\cup\widetilde{z\alpha}) \sqsubseteq F_{X+z}\sqcup F_Y$.
    But by assuming $z\alpha\in\glang/F_Y$, it follows that,
    $$r(\kappa^{-1}(\kappa(F_Y)\cup\widetilde{z\alpha})) - r(F_Y) = |z\alpha|.$$
    Hence, by recalling that Lemma~\ref{lem:big-stick}.2 shows $F_{Y+z} \sqsupseteq F_{X+z} \sqcup F_Y$,
    \[
        r(F_{Y+z}) - r(F_Y) \geq |z\alpha| = r(F_{X+z})-r(F_X),
    \]
    and so the lemma follows upon verifying $z\alpha\in\glang/F_Y$.

    We'll show $z\alpha \in \glang/F_Y$ by proving that $z\alpha'\in\glang/F_Y$ for every $\alpha$-prefix $\alpha'$.
    Induct over $|\alpha'|$, and
    first examine the case $\alpha' = \epsilon$.
    Because $\left(\greedyrep/Y\right)(z)\neq 0$ by assumption, we find $\left(b_{F_Y}/Y\right)(z) = 1$.
    This makes $z \notin \kappa(F_Y)$, and so $\kappa^{-1}(\kappa(F_X) + z) \not\sqsubseteq F_Y$.
    Therefore, because our construction makes $z \in \Gamma(F_X)$, $z \in \Gamma(F_Y)$ follows by applying the Forking Lemma to $\kappa^{-1}(\kappa(F_X) + z)$ and $F_Y$.
    Now for the inductive step, let $\alpha' = \beta x$, and fix any $y_1\ldots y_k \in F_Y/F_X$.
    We first show,
    \begin{equation}\label{eq:cant-use-x}
        (\forall x' \in\widetilde{\beta x})(\nexists i \in\{1,\ldots, k\})\;y_1\ldots y_i x' \in\glang/F_X.
    \end{equation}
    Examine any $y' \in \univ$ and $i\in\{1,\ldots, k\}$ making $y_1\ldots y_i y' \in \glang/F_X$.
    There are two cases, given by the ordering of $F_Y$ and $\kappa^{-1}(\kappa(F_X) \cup \{y_1,\ldots, y_i, y'\})$: If $\kappa^{-1}(\kappa(F_X) \cup \{y_1,\ldots, y_i, y'\}) \not\sqsubseteq F_Y$,
    then as,
    \[
        \kappa^{-1}(\kappa(F_X) \cup \{y_1,\ldots, y_i, y'\}) \sqcap F_Y = \kappa^{-1}(\kappa(F_X) \cup \{y_1,\ldots, y_i\}),
    \]
    since $\kappa^{-1}(\kappa(F_X) \cup \{y_1,\ldots, y_i, y'\}) \succ \kappa^{-1}(\kappa(F_X) \cup \{y_1,\ldots, y_i\})$,
    it follows by the Forking Lemma that $y' \in \Gamma(F_Y)$.
    Therefore, by Lemma~\ref{lem:big-stick}.3 implies $y'\notin Y$, and
    as $\widetilde{\beta x}\subseteq X\subseteq Y$ this shows $y'\notin\widetilde{\beta x}$.
    Now assume $\kappa^{-1}(\kappa(F_X) \cup \{y_1,\ldots, y_i, y'\}) \sqsubseteq F_Y$.
    Observe, this forces $i < k$.
    Moreover, because $z\beta \in \glang/F_Y$ by inductive hypothesis, it is clear that $y' \notin \widetilde{\beta}$.
    By way of contradiction then, suppose $y' = x$.
    Because $y_1\ldots y_k$ is an arbitrary word in $F_Y/F_X$, we can assume $y_{i+1} = x$ without loss of generality. 
    From our inductive hypothesis,
    \[
        y_1\ldots y_i x y_{i+2} \ldots y_k z \beta \in \glang/F_X,
    \]
    so the union of the supports of $z\beta x$ and $y_1\ldots y_i x$ are contained in the support of some $(\glang/F_X)$-feasible word.
    Hence, as $z\beta \in \glang/F_Y$ implies $\widetilde{z\beta}\cap\{y_1,\ldots,y_i\} = \varnothing$, $x \in \glang/F_X$ follows by the local poset property (which is easily seen to be closed under contraction by examining Definition~\ref{def:local-poset}).
    But Lemma~\ref{lem:big-stick}.3 requires $x\notin\Gamma(F_X)$ as $x \in X$, a contradiction.
    So Eq.~\ref{eq:cant-use-x} holds.

    With that done define $F_0 \triangleq \kappa^{-1}(\kappa(F_Y) \cup \widetilde{z\beta})$ for notational convenience, and take some basic $\pi = c_1\ldots c_{r(\glang/F_0)} \in \glang/F_0$.
    We will examine a greedy optimization over $\glang/F_X$, and use Eq.~\ref{eq:cant-use-x} to show a contradiction whenever $z\beta x \not\in\glang/F_Y$.
    Note that it is folklore knowledge that contraction minors inherit the correctness properties of the greedy algorithm in our setting; we give a short proof in Appendix~\ref{app:greedy-minors}.
    So, let $\beta = b_1\ldots b_{k'}$ and weigh the letters of the alphabet according to,
    \begin{equation*}
        w(a) \triangleq \begin{cases}
            1 - (k - i + 1)\cdot\delta,\quad&\text{if }a = y_i,\\
            i\cdot\delta,\quad&\text{if }a=b_i,\\
            (k'+1)\cdot\delta,\quad&\text{if }a=x,\\
            1,\quad&\text{if }a=z,\\
            2 + i\cdot\delta,\quad&\text{if }a=c_i,\\
            \infty,\quad&\text{otherwise},
        \end{cases}
    \end{equation*}
    where $\delta > 0$ is a small real selected so that,
    \begin{equation}\label{eq:delta-bound}
        \max\{k,k' + 1\}\cdot\delta < \frac{1}{2}.
    \end{equation}
    Towards a contradiction, let's assume $z\beta x \notin\glang/F_Y$.
    One can identify that applying the greedy algorithm to $\glang/F_X$ will form the word $y_1\ldots y_k z\beta\pi$:
    Beginning at the empty word $\epsilon$, because the continuations of $\epsilon$ in $\glang/F_X$ equal $\Gamma(F_X)$, and $\widetilde{\beta x}\subseteq X$ makes $\widetilde{\beta x}\cap\Gamma(F_X)=\varnothing$ by Lemma~\ref{lem:big-stick}.3, the algorithm cannot select any of $\{b_1,\ldots,b_{k'}, x\}$. 
    Therefore, because our construction makes $w(y_1) < w(y_2) < \ldots < w(y_k) < w(z)$, and $w(z) < w(c_i)$ for all $c_i \in \widetilde{\pi}$, the greedy algorithm selects $y_1$ as its first letter.
    Then, due to Eq.~\ref{eq:cant-use-x}, similar logic shows that the greedy algorithm forms $y_1\ldots y_k$ over the next $k-1$ steps.
    For the $(k+1)^\text{th}$ step, see that $z$ is the continuation because $z\beta \in \glang/F_Y$ by our inductive hypothesis, and $y_1\ldots y_k\in F_Y/F_X$.
    Then, it follows (once again) from Eq.~\ref{eq:cant-use-x} that $z$ is the continuation of least weight.
    So, the greedy algorithm selects $z$.
    Following this, the greedy algorithm forms the word $y_1\ldots y_k z \beta$ over the next $k'$ steps, since each $b_i$ is a minimum weight continuation of $y_1\ldots y_k z b_1\ldots b_{i-1}$, as,
    $$w(b_i) < w(b_{i+1}) < \ldots < w(b_{k'}) < w(x) < w(c_1) < \ldots < w\left(c_{r(\glang/F_0)}\right).$$
    Here, take note that $y_1\ldots y_k z \beta \in F_0/F_X$ by $y_1\ldots y_k \in F_Y/F_X$ and our definition of $F_0$.
    Because $z\beta x \notin\glang/F_Y$ by assumption, it thus follows that $x\notin\Gamma(F_0)$.
    Furthermore,
    \begin{align*}
        \kappa^{-1}\left(\kappa(F_X) \cup \widetilde{z\beta x}\right) \succ \kappa^{-1}\left(\kappa(F_X) \cup \widetilde{z\beta}\right),&&\text{and},&&\kappa^{-1}\left(\kappa(F_X) \cup \widetilde{z\beta}\right) \sqsubseteq F_0,
    \end{align*}
    so $x\notin\Gamma(F_0)$ makes (by the contrapositive) of the Forking Lemma,
    \[
        \kappa^{-1}\left(\kappa(F_X) \cup \widetilde{z\beta x}\right) \sqsubseteq F_0.
    \]
    Hence, $x \in \kappa(F_0)$, meaning there is no $\xi\in\univ^*$ such that $y_1\ldots y_k z \beta \xi x$ can be feasible in $\glang/F_X$.
    Importantly, this forces the greedy algorithm to select continuations from $\widetilde{\pi}$ until it terminates with $y_1\ldots y_k z \beta \pi$, as we made the weight of every other remaining letter infinite.
    The weight of the resulting word is then,
    \begin{equation}\label{eq:greedy-value}
        w(\{y_1,\ldots, y_k, z\}\cup\widetilde{\beta}\cup\widetilde{\pi}) = 1 + \left(\sum_{i=1}^{k} 1 - (k-i + 1)\cdot \delta\right) + \left(\sum_{i=1}^{k'} i\cdot \delta\right) + w(\widetilde{\pi}).
    \end{equation}

    Now, recall $z\beta x \in \glang/F_X$. By applying the exchange property to $z\beta x$ and $y_1\ldots y_k z \beta \pi$ there is a word $\gamma\in\univ^*$ with $\widetilde{\gamma}\subseteq \{y_1,\ldots, y_k\}\cup\widetilde{\pi}$ which makes a basic word $z\beta x \gamma \in\glang/F_X$.
    Hence, $z\beta x \gamma$ is feasible for our optimization.
    But, we can bound the weight of this word as follows,
    \begin{equation*}
    \begin{split}
        w(\widetilde{\beta}\cup\widetilde{\gamma}&\cup\{x,z\}) \leq 1 + \left(\sum_{i=2}^{k} 1 - (k-i + 1)\cdot \delta\right) + \left(\sum_{i=1}^{k'+1} i\cdot \delta\right) + w(\widetilde{\pi}),\\
        &= (k+k' + 1)\cdot \delta +  \left(\sum_{i=1}^{k} 1 - (k-i + 1)\cdot \delta\right) + \left(\sum_{i=1}^{k'} i\cdot \delta\right) + w(\widetilde{\pi}).
    \end{split}
    \end{equation*}
    From Eq.~\ref{eq:delta-bound} it is apparent that $(k+k'+1)\cdot\delta <1$.
    Then by examining Eq.~\ref{eq:greedy-value} we find,
    $$w(\{y_1,\ldots, y_k, z\}\cup\widetilde{\beta}\cup\widetilde{\pi}) > w(\widetilde{\beta}\cup\widetilde{\gamma}\cup\{x,z\}),$$
    which means that the greedy algorithm fails to optimize $\alpha\mapsto \sum_{a\in\widetilde{\alpha}} w(a)$ over $\glang/F_X$, a contradiction.
    Hence $z\alpha' = z\beta x\in\glang/F_Y$, which makes $z\alpha\in\glang/F_Y$ by the principle of induction.
\end{proof}

\begin{remark}
    The observant reader noticed that the inequality in Lemma~\ref{lem:key-lemma} looks reminiscent of a reversal of the submodular law.
    With some more observations one can observe $\left(\greedyrep/Y\right)(z)\neq 0$ implies $F_X = F_{X+z} \sqcap F_Y$ and $F_{Y+z} = F_{X+z}\sqcup F_Y$.
    Hence, $r(F_{Y+z}) - r(F_Y) \leq r(F_{X+z}) - r(F_X)$ as well, so $F_{X+z}$ and $F_Y$ make a modular pair.
    From Proposition~\ref{prop:greedy-aligned-rep}, one sees this specializes further in the case of matroids as we'd have $F_{X+z}\succ F_X$ and $F_Y\prec F_{Y+z}$.
\end{remark}

Now, we have the ingredients necessary to completely verify Main Result~\ref{mr:lp-greedy=>pm-greedoid}.

\begin{proof}[Proof of Main Result~\ref{mr:lp-greedy=>pm-greedoid}]
    First we verify that $\greedyrep$ is a polymatroid rank function.
    Well, it is clear that $\greedyrep(\varnothing) = r[\epsilon]$, so $\greedyrep$ is normalized.
    Furthermore, monotonicity was proven in Proposition~\ref{prop:gr-monotonicity}.
    Now, fix $X,Y\subseteq \univ$ and $z \in\univ$.
    For submodularity we will verify the law of diminishing returns, i.e. $X\subseteq Y$ implies $\left(\greedyrep/X\right)(z)\geq \left(\greedyrep/Y\right)(z)$.
    So, assume $X\subseteq Y$.
    By monotonicity, $\left(\greedyrep/X\right)(z) \geq 0$.
    And so the desired inequality is clear if $\left(\greedyrep/Y\right)(z) = 0$.
    In light of this, suppose $\left(\greedyrep/Y\right)(z) \neq 0$.
    As outlined before, one applies Lemma~\ref{lem:key-lemma} to Eq.~\ref{eq:key1} to obtain Eq.~\ref{eq:key2}.
    Recalling that Eq.~\ref{eq:key1} is equivalent to $\lfloor\psi_{F_{X+z}}(X+z)\rfloor -\lfloor \psi_{F_X}(X)\rfloor = \lfloor\psi_{F_{Y+z}}(Y+z)\rfloor -\lfloor \psi_{F_Y}(Y)\rfloor$, this equality and Eq.~\ref{eq:key2} imply,
    \begin{multline*}
            \left(\greedyrep/X\right)(z) = \lfloor\psi_{F_{X+z}}(X+z)\rfloor -\lfloor \psi_{F_X}(X)\rfloor + \frac{b_{F_{X+z}}(X+z) -  b_{F_X}(X)}{r(\glang)+1},\\
                                         \geq \lfloor\psi_{F_{Y+z}}(Y+z)\rfloor -\lfloor \psi_{F_Y}(Y)\rfloor + \frac{b_{F_{Y+z}}(Y+z) -  b_{F_Y}(Y)}{r(\glang)+1} = \left(\greedyrep/Y\right)(z),
    \end{multline*}
    hence the law of diminishing returns.
    Thus, $\greedyrep$ is a polymatroid rank function.
    And so, by Proposition~\ref{prop:greedy-aligned-rep} we see $\greedyrep$ is a representation.
    What remains is verifying alignment.
    We inspect the composition $\varphi = \sigma_{\greedyrep}\circ\kappa$.
    The inclusion axiom should be clear.
    For agreement, select $\alpha \in \glang$ and $x \in \kappa[\alpha]\setminus\widetilde{\alpha}$.
    By the contrapositive of Proposition~\ref{prop:strong-opt}, there exists no $\beta \in \glang/\alpha$ such that $x \widetilde{\beta}$.
    Suppose $\alpha\beta$ is feasible and basic.
    Combining our previous observation with optimism shows the existence of a strict $\alpha$-prefix $\alpha'$ with $x \in \Gamma[\alpha']$.
    Therefore $(\greedyrep/\widetilde{\alpha})(x) < 1$ by $\alpha x \notin \glang$ and the law of diminishing returns.
    So, $(\greedyrep/\widetilde{\alpha})(x) = 0$ by Proposition~\ref{prop:greedy-aligned-rep}.
    Since our choice of $x \in \kappa[\alpha]\setminus\widetilde{\alpha}$ was arbitrary it follows $(\greedyrep/\widetilde{\alpha})(\kappa[\alpha]) = 0$, and so $\greedyrep(\kappa[\alpha]) = \greedyrep(\widetilde{\alpha}) = |\alpha|$ by submodularity.
    This shows that $\varphi$ satisfies agreement.
    Finally, select flats $F,F'\in\glang/\mathord{\sim}$ such that $F\prec F'$.
    We note that for there to exist a $\greedyrep$-closed set between $\varphi(F)$ and $\varphi(F')$, there would have to exist some letter $x \in \univ$ with $0 < (\greedyrep/\varphi(F))(x) < 1$.
    However, agreement shows $\sigma_{\greedyrep}(\widetilde{\alpha}) = \varphi(F)$ for all $\alpha \in F$.
    So, this is equivalent to $0 < (\greedyrep/\widetilde{\alpha})(x) < 1$ for some $\alpha \in F$, which is impossible by Proposition~\ref{prop:greedy-aligned-rep}.
\end{proof}

\section{Galois Connections}\label{sec:galois}
We now prove Main Result~\ref{mr:gc}.
Besides extending the approach to polymatroid greedoids, the apparent order-theoretic duality emphasizes the fundamental nature of the relationship between a greedoid and its aligned representations.

We equate the existence of aligned representations to $\kappa$ being a cover-preserving lower adjoint.
As adjoints uniquely determine each other, explicit computation shows the inverse kernel closure operator is the upper adjoint.
See Figure~\ref{fig:galois} for an example of this interaction.
As an application, in Section~\ref{subsec:lattice-embeddings} we will show that the flat supports being closed under intersection provides a necessary and sufficient condition of possessing a certain maximum representation.

\begin{figure}[t!]
    \centering
    \scalebox{.8}{
    \begin{tikzpicture}
    \tikzstyle{S}=[rectangle, draw=black, rounded corners=5pt]
    \node[S] (e) {$\{\epsilon\}$};
    \node[S] (a) [above=of e, yshift=-5pt] {$\{a\}$};
    \node[S] (ac) [above left=of a, yshift=-5pt] {$\{ac\}$};
    \node[S] (ad) [above right=of a, yshift=-5pt] {$\{ad\}$};
    \node (inv) [above=of a] {};
    \node[S] (topflat) [above=of inv] {$\{acb, acd, adb, adc\}$};

    \foreach \from/\to in {e/a, a/ac, a/ad, ac/top, ad/top}
    \draw (\from) -- (\to);

    \node[S] (empty) [right=of e, xshift=150pt, yshift=25pt] {$\varnothing$};
    \node[S] (ap) [above=of empty, yshift=-5pt] {$\{a\}$};
    \node[S] (adp) [above=of ap, yshift=-5pt] {$\{a,d\}$};
    \node[S] (acp) [left=of adp] {$\{a,c\}$};
    \node[S] (topp) [above=of adp, yshift=-5pt] {$\{a,b,c,d\}$};
    \node[S] (bp) [right=of adp] {$\{b\}$};

    \node (lrho) [below=of empty,yshift=22.5pt] {\large $\L(\rho)$};
    \node (lg) [above=of topflat,yshift=-22.5pt] {\large $\L(\glang)$};
    
    \foreach \from/\to in {empty/ap, empty/bp, ap/adp, ap/acp, acp/topp, adp/topp, bp/topp}
    \draw (\from) -- (\to);

    \foreach \from/\to in {e/empty, a/ap, ac/acp, ad/adp, topflat/topp}
    \draw[latex-latex, thick, dotted, gray] (\from) -- (\to);

    \draw[-latex, thick, dotted, gray] (bp) -- (e);

    \node (lal) [right=of lg, xshift=12.5pt] {};
    \node (lar) [right=of lal, xshift=37.5pt] {};

    \draw[-latex] (lal) -- (lar)  node [midway,above] {$\varphi_* = \sigma_\rho \circ \kappa$};

    \node (uar) [left=of lrho, xshift=-15pt] {};
    \node (ual) [left=of uar, xshift=-40pt] {};

    \draw[-latex] (uar) -- (ual) node [midway, below] {$\varphi^* = \kappa^{-1}$};
\end{tikzpicture}
    }
    \caption{The Galois connection between the flats of the undirected branching greedoid in Figure~\ref{fig:ubg} and the closed sets of the representation $\rho$ in Example~\ref{ex:ubg-rep}. 
        We see $\L(\glang)$ is isomorphic to a sublattice of $\L(\rho)$.
        But this does not hold generally, see Main Result~\ref{mr:strong-pm-greedoid}.
    }\label{fig:galois}
\end{figure}

\begin{proof}[Proof of Main Result~\ref{mr:gc}]
    First, we show alignment implies the above three statements.
    For 1., select any $y\notin\kappa[\alpha]$.
    By Proposition~\ref{prop:strong-opt}, there exists $\beta\in\glang/\alpha$ such that $\alpha\beta y \in \glang$.
    It follows that $(\rho/\widetilde{\alpha\beta})(y) = 1$, so $(\rho/\widetilde{\alpha})(y) \geq 1$ by the law of diminishing returns.
    Equivalently, $y \notin \sigma_\rho(\widetilde{\alpha})$.
    Because $y$ was an arbitrary letter beyond the flat support, this shows $\sigma_\rho(\widetilde{\alpha})\subseteq \kappa[\alpha]$ by contraposition.
    Continuing on, let $\varphi:\mathscr{L}(\glang)\to\mathscr{L}(\rho)$ be the mapping witnessing alignment.
    By agreement and inclusion, $\varphi[\alpha] \subseteq \sigma_\rho(\widetilde{\alpha})$.
    But inclusion also makes $\beta\sim\alpha$ imply $\widetilde{\beta}\subseteq \varphi[\alpha]$, so
    $\kappa[\alpha]\subseteq \varphi[\alpha]$ as well.
    This shows,
    \begin{equation}\label{eq:keqs}
        \sigma_\rho(\widetilde{\alpha}) \subseteq \kappa[\alpha] \subseteq \varphi[\alpha] \subseteq\sigma_\rho(\widetilde{\alpha}),
    \end{equation}
    and so $\kappa[\alpha]$ is $\rho$-closed, hence 1. follows.

    Observe, Eq.~\ref{eq:keqs} shows that $\kappa = \varphi$, and so $\kappa$ is cover-preserving by the alignment axioms.
    Another consequence of Eq.~\ref{eq:keqs} is we may restrict the codomain of $\kappa$ to $\L(\rho)$ without problem.
    Conveniently, because $\kappa$ defines an isomorphism between $\L(\glang)$ and the flat supports ordered by containment, it follows that $\kappa$ is order-preserving and order-reflecting onto $\L(\rho)$.
    This makes $\kappa$ an order-embedding between $\L(\glang)$ and $\L(\rho)$.
    Recall $\L(\glang)$ is semimodular.
    Lemma 1 of \cite{wild1993cover} states that every cover-preserving order-embedding originating from a semimodular lattice is join-preserving.
    Therefore, $\kappa$ is a lower adjoint by Lemma~\ref{lem:gc-join}.
    As adjoints uniquely determine each other, using Eq.~\ref{eq:adjoints} we see,
    \[
        \varphi^*(X) = \bigsqcup\{F\in\glang/\mathord{\sim}\mid \kappa(F) \subseteq X\} = \kappa^{-1}\left(\bigcup\{\widetilde{\alpha}\mid \alpha \in \glang\cap X!\}\right) = \kappa^{-1}(X).
    \]
    Hence, $\kappa:\L(\glang)\rightleftarrows\L(\rho):\kappa^{-1}$ is a Galois connection.
    This gives 3., so alignment implies (1-3).

    Now, for the other direction assume (1-3) are true; we prove that $\kappa$ is a mapping $\L(\glang)\to\L(\rho)$ satisfying the alignment axioms.
    1. shows that this task is well-defined, while 2. shows that $\kappa$ is cover-preserving.
    Inclusion also follows by the definition of $\kappa$ making $\widetilde{\alpha}\subseteq\kappa[\alpha]$.
    Note, this implies $\sigma_\rho(\widetilde{\alpha}) \subseteq \kappa[\alpha]$.
    Then by the definition of a Galois connection, $\sigma_\rho(\widetilde{\alpha}))\subseteq \kappa[\alpha]$ implies $\kappa^{-1}(\sigma_\rho(\widetilde{\alpha}))\sqsubseteq [\alpha]$.
    But, $[\alpha] = \kappa^{-1}(\widetilde{\alpha})$ makes $[\alpha]\sqsubseteq \kappa^{-1}(\sigma_\rho(\widetilde{\alpha}))$ as well, so we have $[\alpha] = \kappa^{-1}(\sigma_{\rho}(\widetilde{\alpha}))$.
    Continuing on, recall that $\kappa\circ\kappa^{-1}$ is an interior operator.
    As it is deflationary, we have $\sigma_\rho(\widetilde{\alpha})\supseteq \kappa(\kappa^{-1}(\sigma_\rho(\widetilde{\alpha}))$.
    Then, substituting $[\alpha] = \kappa^{-1}(\sigma_{\rho}(\widetilde{\alpha}))$ shows $\sigma_\rho(\widetilde{\alpha})\supseteq  \kappa[\alpha]$.
    Combining this with our initial observation $\sigma_\rho(\widetilde{\alpha})\subseteq\kappa[\alpha]$, we see $\sigma_\rho(\widetilde{\alpha}) = \kappa[\alpha]$.
    And so we have agreement.
\end{proof}

\subsection{Strong Polymatroid Greedoids and the Greatest Representation}\label{subsec:lattice-embeddings}

Define the function $\accentset{\vee}{\rho}:2^\univ \to \mathbb{Z}_+$ as one assigning a value to each subset $X\subseteq\univ$ equal to the minimum rank among flats containing $X$ in its flat support,
\[
    \accentset{\vee}{\rho}(X) \triangleq \min\{r(F)\mid X\subseteq\kappa(F)\}.
\]
Suppose that $\accentset{\vee}{\rho}$ is a representation of a greedoid $\glang$.
It is not difficult to argue that $\accentset{\vee}{\rho}$ is then the unique maximum aligned representation of $\glang$.
In light of this, we call $\accentset{\vee}{\rho}$ the \emph{greatest representation}.
\begin{proposition}\label{prop:unique-max}
    Let $\glang$ be a polymatroid greedoid.
    If the greatest representation $\accentset{\vee}{\rho}$ is a polymatroid rank function representing $\glang$, then $\accentset{\vee}{\rho}$ is the unique maximum aligned representation of $\glang$.
\end{proposition}
\begin{proof}
    Observe, if $\accentset{\vee}{\rho}$ is both a polymatroid rank function and a representation, then because $\accentset{\vee}{\rho}$ is integral Proposition~\ref{prop:int->aligned} would show $\accentset{\vee}{\rho}$ is aligned.
    With that settled, let $\rho$ be any aligned representation of $\glang$, and select any subset $X\subseteq \univ$.
    Fix a flat $F\in\glang/\mathord{\sim}$ with $X \subseteq \kappa(F)$ and $\accentset{\vee}{\rho}(X) = r(F)$.
    Then, $X\subseteq\kappa(F)$ implies $\rho(X) \leq \rho(\kappa(F))$ since $\rho$ is a polymatroid rank function.
    But $\rho(\kappa(F)) = r(F)$ because $\rho$ is aligned.
    Therefore,
    \[
        \rho(X) \leq \rho(\kappa(F)) = r(F) = \accentset{\vee}{\rho}(X),
    \]
    and we conclude the result.
\end{proof}

We will see that possessing the greatest representation implies nice greedoid structure.
For this reason, we name the class of greedoids with the greatest representation \emph{strong polymatroid greedoids}.

In what follows we will derive properties of strong polymatroid greedoids, and describe them combinatorially.
To begin the latter, first consider a greedoid whose flat supports are closed under intersection.
If $\kappa(F)\cap\kappa(F')$ is a flat support then $\kappa^{-1}\left(\kappa(F)\cap \kappa(F')\right)$ must be lesser than both of $F$ and $F'$ by the isomorphism between the flat supports and the flats of an interval greedoid.
This shows $\kappa(F) \cap \kappa(F') \subseteq \kappa(F\sqcap F')$.
Furthermore, $\kappa(F\sqcap F') \subseteq \kappa(F)\cap \kappa(F')$ is always true because $\kappa:\L(\glang)\to\left(2^\univ,\subseteq\right)$ is order-preserving.
Therefore,
\begin{equation}\label{eq:int-closure}
    \kappa(F)\cap\kappa(F') = \kappa(F\sqcap F'),\quad\forall F,F'\in\glang/\mathord{\sim}.
\end{equation}
So, for every subset $X\subseteq \univ$ there is a unique minimal flat containing $X$ in its flat support, definitionally given by the meet of flats containing $X$ in their flat supports.
The rank is the value given to $X$ by $\accentset{\vee}{\rho}$, i.e.,
\[
    \accentset{\vee}{\rho}(X) = r\left(\bigsqcap \{F\in\glang/\mathord{\sim}\mid X\subseteq \kappa(F)\}\right).
\]
We use this argue that the greatest representation is a polymatroid rank function representing any optimistic interval greedoid whose flat supports are closed under intersection.

\begin{proposition}\label{prop:gr-is-rep}
    Let $\glang$ be a greedoid with optimism and the interval property.
    If the flat supports are closed under intersection, then the greatest representation $\accentset{\vee}{\rho}$ is a polymatroid rank function representing $\glang$.
\end{proposition}
\begin{proof}[Proof of Proposition~\ref{prop:gr-is-rep}]
    Fix subsets $X, Y\subseteq \univ$ and let,
    \begin{align*}
        M_X \triangleq \bigsqcap\{F\in\glang/\mathord{\sim}\mid X\subseteq\kappa(F)\}.
    \end{align*}
    Define $M_Y$ similarly.
    Our construction makes $X\cup Y \subseteq \kappa(M_X\sqcup M_Y)$, and $X\cap Y \in \kappa(M_X\sqcap M_Y)$ by assuming the flat supports are closed under intersection.
    By semimodularity,
    \[
        \accentset{\vee}{\rho}(X) + \accentset{\vee}{\rho}(Y) = r(M_X) + r(M_Y) \geq r(M_X\sqcap M_Y) + r(M_X \sqcup M_Y) \geq \accentset{\vee}{\rho}(X\cap Y) + \accentset{\vee}{\rho}(X\cup Y).
    \]
    Thus, $\accentset{\vee}{\rho}$ is submodular.
    Furthermore, $\accentset{\vee}{\rho}$ is clearly normalized, and monotonicity follows from the fact that $X\subseteq Y$ would imply $M_X \sqsubseteq M_Y$ in our construction.
    So $\accentset{\vee}{\rho}$ is a polymatroid rank function.

    Now, to show it is a representation we must first verify,
    \begin{equation}\label{eq:ufntu}
        M_{\widetilde{\alpha}} = [\alpha],\quad\forall\alpha\in\glang,
    \end{equation}
    where $M_{\widetilde{\alpha}}$ is defined as before.
    Fix $\alpha \in \glang$, and see that $\widetilde{\alpha} \subseteq \kappa[\alpha]$ implies $M_{\widetilde{\alpha}}\sqsubseteq [\alpha]$.
    Because the flat supports are closed under intersection, we see $\widetilde{\alpha}\subseteq \kappa(M_{\widetilde{\alpha}})$.
    So, there is no $z \in \widetilde{\alpha}$ such that $z\in\Gamma(M_{\widetilde{\alpha}})$.
    To maintain the exchange property the only possibility is $M_{\widetilde{\alpha}}= [\alpha]$, and so Eq.~\ref{eq:ufntu} holds.
    Continuing on, fix $\alpha \in \glang$ and $z\in\univ$.
    First assume $z\in\Gamma[\alpha]$.
    Then $[\alpha]\prec[\alpha z]$ and Eq.~\ref{eq:ufntu} show the desired $\left(\accentset{\vee}{\rho}/\widetilde{\alpha}\right)(z) = 1$.
    Now assume $z \notin\Gamma[\alpha]$.
    If $z \in \kappa[\alpha]$, then once again with Eq.~\ref{eq:ufntu} it follows that the minimum flat containing $\widetilde{\alpha}+z$ in its flat support is $[\alpha]$ itself.
    Therefore, $z \in \kappa[\alpha]$ implies $\left(\rho/\widetilde{\alpha}\right)(z) = 0$.
    In the other case, where $z \notin\kappa[\alpha]$, there cannot exist any flat covering $[\alpha]$ containing $\widetilde{\alpha} + z$ in its flat support.
    This is because if such a flat did exist then $z$ would have to be a continuation by Proposition~\ref{prop:strong-opt}.
    Our insight here applies to $M_{\widetilde{\alpha}+z}$ in particular.
    Observe that $X \mapsto M_X$ is order-preserving between $\left(2^\univ,\subseteq\right)$ and $\L(\glang)$, and so Eq.~\ref{eq:ufntu} implies $M_{\widetilde{\alpha} + z}$ lies above $[\alpha]$.
    And so, because $M_{\widetilde{\alpha} + z}$ cannot cover $[\alpha]$ and $z\notin\kappa[\alpha]$ implies $M_{\widetilde{\alpha} + z}\neq[\alpha]$, the rank of $M_{\widetilde{\alpha}+z}$ is at least 2 greater than that of $[\alpha]$.
    Thus, we find $\left(\accentset{\vee}{\rho}/\widetilde{\alpha}\right)(z) \geq 2$.
\end{proof}

The flat supports being a Moore family is also shown as necessary for possessing the greatest representation.
In particular, we use the closure theory of $\kappa:\L(\glang)\rightleftarrows\L(\rho):\kappa^{-1}$ to show $\L(\glang)\cong\L(\accentset{\vee}{\rho})$.
Then, since $\L(\glang)\cong(2^\univ,\subseteq)$, this isomorphism relates the meet of $\L(\accentset{\vee}{\rho})$ (i.e. intersection) to that of the flat supports.
Using the adjoints, we also find greedoids with the greatest representation make lattice-embeddings for \emph{all} aligned representations $\rho$.
Such polymatroid greedoids are especially well-behaved, as their flats are always isomorphic to a sublattice of $\L(\rho)$.
This is Main Result~\ref{mr:strong-pm-greedoid}.
\begin{proof}[Proof of Main Result~\ref{mr:strong-pm-greedoid}]
    Firstly, see that 1.$\iff$5. by definition.
    We proceed by showing $2.\iff 5.$ and $5. \implies 3.\implies 4. \implies 5.$
    First we verify $5.\implies 2.$.
    Proposition~\ref{prop:gr-is-rep} shows $\accentset{\vee}{\rho}$ is an integral representation of $\glang$, and so Main Result~\ref{mr:gc} makes $\sigma_{\accentset{\vee}{\rho}}\circ\kappa = \kappa$ and $\kappa$ a lower adjoint.
    By assuming 5. we find $\kappa$ is meet-preserving.
    Furthermore, by Lemma~\ref{lem:gc-join} $\kappa$ is also join-preserving, hence $\sigma_{\accentset{\vee}{\rho}}\circ\kappa = \kappa$ is a lattice-embedding.
    Now for $2. \implies 5.$, because $\glang$ is a polymatroid greedoid we can once again let $\sigma_{\accentset{\vee}{\rho}}\circ\kappa = \kappa$ by Main Result~\ref{mr:gc}.
    So $\kappa$ is a lattice-embedding.
    Then, because the meet operation on the closed sets of a polymatroid is intersection, this makes $\kappa(F\sqcap F') = \kappa(F)\cap\kappa(F')$.

    With that done, we see $5.\implies 3.$ from Proposition~\ref{prop:gr-is-rep}. 
    For $3. \implies 4.$, first note that because 3. makes $\glang$ possess an aligned representation, we may use the Galois connection $\kappa:\L(\glang)\rightleftarrows\L(\accentset{\vee}{\rho}):\kappa^{-1}$ by Main Result~\ref{mr:gc}.
    We show 4. by proving that the interior operator $\kappa\circ\kappa^{-1}$ equals the identity mapping over $\L(\accentset{\vee}{\rho})$.
    This is sufficient because if both $\kappa\circ\kappa^{-1}$ and $\kappa^{-1}\circ\kappa$ equal the identity mappings on their codomains then $\kappa$ and $\kappa^{-1}$ are inverses.
    Then $\kappa:\L(\glang)\to\L(\accentset{\vee}{\rho})$ would define an order-preserving bijection, i.e. an isomorphism.
    Finally, we already know $\kappa^{-1}\circ\kappa$ is the identity mapping because $\kappa$ is injective (due to the correspondence between flats and the flat supports), so we need only examine $\kappa\circ\kappa^{-1}$.
    Fix some $X\subseteq \univ$.
    By inspecting the definition of $\accentset{\vee}{\rho}$ we see that,
    \[
        \sigma_{\accentset{\vee}{\rho}}(X) = \bigcup\left\{\kappa(F)\mid X\subseteq\kappa(F)\text{ and }(\forall F'\in\glang/\mathord{\sim})\;X\subseteq\kappa(F')\implies r(F) \leq r(F')\right\}.
    \]
    Since $\sigma_{\accentset{\vee}{\rho}}(X)$ is a union of flat supports, it follows that it is a partial alphabet.
    Hence, $\sigma_{\accentset{\vee}{\rho}}(X) \subseteq \kappa(\sigma_{\accentset{\vee}{\rho}}(X))$, which is equivalent to
    $\sigma_{\accentset{\vee}{\rho}}(X)\subseteq \kappa(\kappa^{-1}(\sigma_{\accentset{\vee}{\rho}}(X)))$.
    However, because the interior operator is deflationary we also have $\kappa(\kappa^{-1}(\sigma_{\accentset{\vee}{\rho}}(X))) \subseteq \sigma_{\accentset{\vee}{\rho}}(X)$, and so $\sigma_{\accentset{\vee}{\rho}}(X) = \kappa(\kappa^{-1}(\sigma_{\accentset{\vee}{\rho}}(X)))$.
    Thus 4.

    The final component is proving $(4.)\implies (5.)$.
    Well, fix $\alpha \in \glang$ and see that $[\alpha]$ is the unique flat of minimum rank containing the feasible support $\widetilde{\alpha}$ in its flat support.
    By definition of $\accentset{\vee}{\rho}$, this implies $\sigma_{\accentset{\vee}{\rho}}(\widetilde{\alpha}) = \kappa[\alpha]$.
    Therefore, all flat supports are $\accentset{\vee}{\rho}$-closed sets.
    Recall $\L(\glang)\cong\left(\kappa\left(2^\univ\right),\subseteq\right)$ due to the interval property, so by transitivity of isomorphism we have $\left(\kappa\left(2^\univ\right),\subseteq\right)\cong\L(\accentset{\vee}{\rho})$.
    This makes the flat supports and $\accentset{\vee}{\rho}$-closed sets equivalent (as both are defined as subposets of $(2^\univ,\subseteq)$), so the flat supports must form a Moore family.
    Hence, the flat supports are closed under intersection.
\end{proof}

Via results from Section~\ref{sec:pm-greedoid}, a straightforward consequence is that a greedoid is a strong polymatroid greedoid if and only if its flat supports are a Moore family.

\begin{corollary}\label{cor:strong-pm}
    Let $\glang$ be a greedoid.
    Then, $\glang$ is a strong polymatroid greedoid if and only if it is an optimistic interval greedoid with flat supports forming a Moore family.
\end{corollary}
\begin{proof}
    Suppose $\glang$ is a strong polymatroid greedoid.
    Theorems of \cite{korte1985polymatroid} and Theorem~\ref{thm:pm->optimism} ensure any greedoid with an aligned representation is an optimistic interval greedoid, while the flat supports are a Moore family by definition.
    The other direction follows by Main Result~\ref{mr:strong-pm-greedoid}.
\end{proof}

\begin{figure}[t!]
    \centering
    \scalebox{.8}{\newcommand{\drawSolidDashedEdge}[2]{
    \coordinate (start) at (#1);
    \coordinate (end) at (#2);

    \path (start) -- (end) coordinate[pos=0.2] (firstsolid)
                        coordinate[pos=0.8] (secondsolid);

    \draw[solid] (#1) -- (firstsolid);
    
    \draw[thick,loosely dotted] (firstsolid) -- (secondsolid);
    
    \draw[solid] (secondsolid) -- (#2);
}

\begin{tikzpicture}[rotate=77]  
    \def \n {7} 
    \def \radius {3cm}
    \def \angle {360/\n}

    \foreach \i in {0,1,...,6} {
        \coordinate (v\i) at ({\i * \angle + 90}:\radius); 
        
        \ifnum \i=2
            \node at (v\i) [draw,circle] (n\i) {$s$};
        \else
            \node at (v\i) [draw,circle] (n\i) {};
        \fi
    }

    \foreach \i in {0,1,...,5} {
        \pgfmathtruncatemacro{\nexti}{mod(\i+1,\n)}

        \ifnum \i=0
            \drawSolidDashedEdge{n\i}{n\nexti};
        \else\ifnum \i=3
            \drawSolidDashedEdge{n\i}{n\nexti};
        \else
            \draw (n\i) -- (n\nexti);
        \fi\fi
    }
    \draw (n6) -- (n0);
    
    \node[above] at ($(v6)!0.5!(v5)$) {$x$};  

    \path (v2) -- (v3) coordinate[pos=0.5] (a1);
    \path (v2) -- (v1) coordinate[pos=0.5] (b1);
    \node[below right] at (a1) {$a_1$};
    \node[below left] at (b1) {$b_1$};

    \path (v4) -- (v5) coordinate[pos=0.5] (ai);
    \path (v0) -- (v6) coordinate[pos=0.5] (bj);
    \node[above right] at (ai) {$a_i$};
    \node[above left] at (bj) {$b_j$};

\end{tikzpicture}}
    \caption{
        Consider an undirected branching greedoid formed from the above cycle graph, with root node $s$.
        We can see that there are exactly two minimal flats containing $\{x\}$ in their flat supports: $[a_1\ldots a_i x]$ and $[b_1\ldots b_j x]$.
        Assume $\max\{i,j\} > 1$ so that $[a_1\ldots a_i x] \neq [b_1\ldots b_j x]$.
        Then, the meet follows $[a_1\ldots a_i x] \sqcap [b_1\ldots b_j x] = [\epsilon]$, but $\kappa[\epsilon]$ clearly does not contain $x$.
    }\label{fig:flat support-not-closed}
\end{figure}
Because our description is necessary and sufficient, not every polymatroid greedoid can possess the greatest representation (flat supports cannot always form a Moore family).
For example, though the undirected branching greedoid given in Figure~\ref{fig:ubg} satisfies this condition, this is not true in general.
See Figure~\ref{fig:flat support-not-closed} for an example using a cycle graph.

However, there are many interesting examples which do possess the greatest representation.
One immediate example is a matroid, as its flat supports correspond to its rank closed sets (which are a Moore family).
Another example is \emph{poset antimatroids}, that is a greedoid whose feasible words are the linear extensions of a poset, since one can identify that its flat supports are the ideals of a poset (which are easily seen to be closed under intersection). 
A generalization of both matroids and poset antimatroids are \emph{distributive supermatroids}.
\begin{definition}[Distributive Supermatroid]
    A greedoid $\glang$ is a \emph{distributive supermatroid} if and only if there exists a poset $\mathscr{P}$ such that,
    \begin{enumerate}
        \item $\alpha \in \glang$ implies $\widetilde{\alpha}$ is an ideal of $\mathscr{P}$,
        \item $\alpha \in \glang$, $X$ is an ideal of $\mathscr{P}$, and $X\subseteq \widetilde{\alpha}$ imply $X!\cap\glang \neq \varnothing$.
    \end{enumerate}
\end{definition}

Distributive supermatroids are notable because they make a generalization of Edmond's matroid intersection theorem \cite{tardos1990intersection, edmonds1979matroid}.
Here, we show \emph{optimistic} distributive supermatroids are a notable class possessing the greatest representation, by using the description in Corollary~\ref{cor:strong-pm}.

\begin{proposition}
    Let $\glang$ be a distributive supermatroid, and suppose $\glang$ is optimistic.
    Then, $\glang$ possesses the greatest representation.
\end{proposition}
\begin{proof}
    Distributive supermatroids possess the interval property \cite{korte2012greedoids}, and we are including optimism in our assumptions.
    Therefore, by Corollary~\ref{cor:strong-pm} we need only verify that the flat supports of such greedoids are closed under intersection.
    Let $F\in\glang/\mathord{\sim}$.
    Then, by the interval property it follows that $\kappa(F)$ is a union of ideals on some poset.
    However, the union of ideals is once again an ideal.
    Now pick another flat $F'\in\glang/\mathord{\sim}$.
    Because ideals are closed under intersection, it follows that $\kappa(F)\cap\kappa(F')$ is also an ideal.
    However, by the distributive law,
    \begin{equation}\label{eq:asdfsadfjjj}
        \kappa(F)\cap\kappa(F') = \left(\bigcup\{\widetilde{\alpha} \mid \alpha \in F\}\right) \cap \left(\bigcup\{\widetilde{\beta}\mid \beta \in F'\}\right) = \bigcup\{\widetilde{\alpha}\cap\widetilde{\beta}\mid (\alpha,\beta)\in F\times F'\}.
    \end{equation}
    Observe, $(\alpha,\beta)\in F\times F'$ implies $\widetilde{\alpha}\cap\widetilde{\beta}$ is an ideal which is a subset of a feasible support (either of $\widetilde{\alpha}$ or $\widetilde{\beta}$ suffice), and so this intersection can be permuted into a feasible word.
    Therefore, Eq.~\ref{eq:asdfsadfjjj} implies $\kappa(F)\cap\kappa(F')$ is a union of feasible supports, i.e. a partial alphabet.
    This makes,
    \[
        \kappa(F)\cap\kappa(F') \subseteq \kappa(\kappa(F)\cap\kappa(F')) = \kappa(F\sqcap F'). 
    \]
    However, because $\kappa:\L(\glang)\to(2^\univ,\subseteq)$ is order-preserving, we also have $\kappa(F \sqcap F') \subseteq \kappa(F)\cap\kappa(F')$.
    Hence, $\kappa(F)\cap\kappa(F') = \kappa(F\sqcap F')$.
\end{proof}

\section*{Acknowledgements}
The authors would like to thank a set of anonymous referees for their helpful comments and encouragement, as well as identification of an error in the original proof of Lemma~\ref{lem:forking}.
\printbibliography
\newpage

\appendix

\section{Optimism and Strong Exchange}\label{app:optimism}

The purpose of this section is to show that every greedoid with the strong-exchange property, greedoids for which the greedy algorithm always solves any linear optimization over its basic words, possess optimism.
Our technique is to use a similar trick as that employed in the proof of Lemma~\ref{lem:key-lemma} to show an instance where the greedy algorithm fails in absence of optimism.

\begin{proposition}\label{prop:strong-exchange=>optimism}
    Let $\glang$ be a greedoid with the strong-exchange property.
    Then, $\glang$ is optimistic.
\end{proposition}
\begin{proof}
    Select a basic word $\alpha \in\glang$.
    To show $\glang$ is optimistic, it is sufficient to prove that for every $x \in \univ$ which is not a loop, there is an $\alpha$-prefix $\alpha'$ with $x \in \Gamma[\alpha']$.
    This statement is trivially true for $x \in \widetilde{\alpha}$, so we examine the case of $x \notin \widetilde{\alpha}$.
    Proceed by way of contradiction: Suppose there exists no $\alpha$-prefix containing $x$ in its set of feasible continuations.
    Then, let $\alpha = y_1\ldots y_r$ and construct a weighting $w:\univ\to\mathbb{R}$ via,
    \begin{equation*}
        w(z) \triangleq \begin{cases}
            \frac{i}{r(\glang)+1} - 1,\quad&\text{if } z = y_i,\\
            -r(\glang),\quad&\text{if }z = x,\\
            0,\quad&\text{otherwise}.
        \end{cases}
    \end{equation*}
    We claim that in the $i^\text{th}$ iteration, the greedy algorithm selects the letter $y_i$.
    This is true in the first step because $\epsilon$ is an $\alpha$-prefix, meaning that $x\notin\Gamma[\epsilon]$ by assumption which leaves $y_1$ as the letter of least weight which is initially a continuation.
    The same principle holds for the $i^\text{th}$ step, as if the greedy algorithm has already formed the word $y_1\ldots y_{i-1}$ then $x$ is not a feasible continuation by our assumptions, and $y_i$ is thus the continuation of least weight.
    So, it is true that the greedy algorithm forms the word $\alpha$ under this weighting by the principle of induction.
    However, because $x$ is not a loop there exists some basic word $\beta \in \glang$ containing $x$ in its support.
    Observe,
    \[
        \sum_{z \in \widetilde{\beta}}w(z) \leq w(x) < \sum_{z \in \widetilde{\alpha}} w(z).
    \]
    Hence, greedy fails to output the optimal basic word, contradicting strong-exchange.
\end{proof}
\begin{remark}\label{rem:optimism=/>strong-exchange}
    The existence of polymatroid greedoids, and thus optimistic greedoids, without the strong-exchange property is already known.
    One can construct such a greedoid using maximal ordered geometries (see Ch. 7 and 8 of \cite{korte2012greedoids}) for example.
\end{remark}

\section{An Optimistic Local Poset Greedoid with no Aligned Polymatroid Representation}\label{app:local-augmentation}

In this section we show the existence of an optimistic local poset greedoid which cannot possess an aligned representation.
We adapt an example proposed in \cite{korte1988intersection} to refute a conjecture that (integral) polymatroid greedoids are exactly local poset greedoids possessing a \emph{local augmentation} axiom.
To obtain an optimistic greedoid we do require a small change, which is pointed out in Remark~\ref{rem:change}.
Because of this and our slight difference in purpose, we present an analysis which does not utilize intermediate lemmas given in \cite{korte1988intersection}.

\begin{figure}
\begin{tikzpicture}[rotate=180,scale=1.5]
    \node (A) at (0, 1.155) [draw, circle] {};        
    \node (B) at (-1, -0.577) [draw, circle] {};      
    \node (C) at (1, -0.577) [draw, circle] {};       
    
    \draw (A) -- node[midway, right]{$\overline{q}$} (B);
    \draw (B) -- node[midway, above]{$\overline{p}$} (C);
    \draw (C) -- node[midway, left]{$\overline{r}$} (A); 

    \node (D1) at (-1, -2.577) [draw, circle] {};
    \node (D2) at (1, -2.577) [draw, circle] {};

    \draw (B) -- node[midway, right]{$\overbar{a_3}$} (D1);
    \draw (D1) -- node[midway, above]{$\overbar{a_2}$} (D2);
    \draw (D2) -- node[midway, left]{$\overbar{a_1}$} (C);

    \node (E1) at (2.732, 0.423) [draw, circle] {};    
    \node (E2) at (1.732, 2.155) [draw,circle] {};    

    \draw (C) -- node[midway, above]{$\overbar{c_3}$} (E1);
    \draw (E1) -- node[midway, left]{$\overbar{c_2}$} (E2);
    \draw (E2) -- node[midway, below]{$\overbar{c_1}$} (A);

    \node (F1) at (-2.732, 0.423) [draw, circle] {};
    \node (F2) at (-1.732, 2.155) [draw, circle] {};

    \draw (B) -- node[midway, above]{$\overbar{b_1}$} (F1);
    \draw (F1) -- node[midway, right]{$\overbar{b_2}$} (F2);
    \draw (F2) -- node[midway, below]{$\overbar{b_3}$} (A);
\end{tikzpicture}
\caption{A graph $(V, E)$ used to construct a certain greedoid with no polymatroid representation in \cite{korte1988intersection}.}\label{fig:counterexample}
\end{figure}

Consider the graph $(V, E)$ as defined in Figure~\ref{fig:counterexample}.
Define the function $f: E \to \mathbb{Z}_+$ so that $f(X)$ is equal to the number of edges in a spanning tree of the subgraph induced by restricting to the edges in $X$ (i.e. $f$ is just the rank function of the \emph{graphic matroid} defined over $(V,E)$).
Next, truncate $f$ so that its maximum value is 6, which is to say define another function $f':E\to\mathbb{Z}_+$ such that $f'(X) \triangleq \min\{f(X), 6\}$.
Now, we will define the alphabet of our greedoid.
Specifically, let,
\[
    a_i = \{\overbar{a_i}\}, b_i = \{\overbar{b_i}\},\text{ and } c_i = \{\overbar{c_i}\},\quad\forall i \in\{1,2,3\},
\]
as well as $y_1 = y_2 = y_3 = \{\overline{p},\overline{q},\overline{r}\}$.
Then, for convenience also let $A = \{a_1, a_2, a_3\}, B = \{b_1, b_2, b_3\}, C = \{c_1, c_2, c_3\}$, and $Y = \{y_1, y_2, y_3\}$ so that we can define our greedoid's alphabet as $\univ = A\cup B\cup C\cup Y$.
In a similar fashion to the way we define representations of polymatroid greedoids (see Eq.~\ref{eq:rep}), define a language $\glang_0$ over $\univ$ as follows:
\[
    \glang_0 \triangleq \left\{x_1\ldots x_i \in \univ^*\Biggm\vert \left(f'/\bigcup_{j=1}^{i-1}x_j\right)(x_i) = 1\right\}.
\]
Hence, $\glang_0$ is exactly given by the permutations of acyclic sets of edges of size at most 6 having empty intersection with $Y$, and acyclic sets of edges of size exactly 6.
Finally, define another language $\glang_A$ consisting of words of length at most 6 obtained by concatenating words in $\glang_0$ containing at least two letters from $A$ in their support with words in the free monoid $\univ^*$ in a way ensuring that the resulting word contains $A$ in its support if it is not feasible in $\glang_0$,
\[
    \glang_A \triangleq \left\{\alpha x\beta \in\univ^*\mid \alpha x \beta\text{ is simple},|\alpha x\beta|\leq 6, \alpha \in \glang_0,\text{and }A\subseteq(\widetilde{\alpha} + x)\right\}.
\]
Importantly, this means that for any word $\alpha \in \glang_A$ such that $|\alpha| < 6$, $\alpha x \in \glang_A$ for all $x \notin\widetilde{\alpha}$.
Finally, let $\glang \triangleq \glang_0 \cup \glang_A$.
We will show that $\glang$ is an optimistic local poset greedoid for which there is no polymatroid representation.
For what follows, let $r_0$ be the rank function of $\glang_0$ (which we will see is a greedoid as well).

\begin{claim}
    $\glang$ is a greedoid.
\end{claim}
\begin{proof}
    Note $\epsilon \in \glang_0$, and $\glang_0\cup \glang_A$ is clearly hereditary.
    So, we need only examine the exchange property.
    Let $\alpha,\beta \in \glang$ be such that $|\beta| > |\alpha|$.
    Observe, the function restriction $f'|_\univ$ is a monotone, normalized, and submodular function, and so $\glang_0$ is a polymatroid greedoid.
    Hence, exchange holds whenever $\alpha$ and $\beta$ are both contained in $\glang_0$.
    Likewise, by definition of $\glang_A$ there of course exists a letter from $\widetilde{\beta}$ which can be used to augment $\alpha$ whenever $(\alpha,\beta) \in \glang_A\times \glang_A$ or $(\alpha,\beta) \in \glang_A\times\glang_0$.
    So, let $\alpha \in \glang_0\setminus\glang_A$ and $\beta \in \glang_A$.
    Suppose $A\subseteq\sigma_{r_0}\left(\widetilde{\alpha}\right)$.
    Then $|A\cap\widetilde{\alpha}| \geq 2$, meaning that $|A\cap\widetilde{\alpha}| = 2$ by $\alpha \notin \glang_A$.
    Therefore, there is a letter $x \in A\cap(\widetilde{\beta}\setminus\widetilde{\alpha})$ which gives the word $\alpha x\epsilon\in\glang_A$.
    So $\alpha x \in \glang$, and exchange holds in this case.
    Now examine the case where $A\not\subseteq\sigma_{r_0}\left(\widetilde{\alpha}\right)$.
    Recall that the rank closure is the complement of the feasible continuations, hence making $A\setminus\widetilde{\alpha}$ a subset of the continuations of $\alpha$ giving longer words in $\glang_0$.
    Because $\beta \in \glang_A$ implies $A\subseteq \widetilde{\beta}$ it follows then that there is a letter in $\widetilde{\beta}$ which can be appended onto $\alpha$ to obtain a longer word in $\glang_0$.
\end{proof}

\begin{claim}
    $\glang$ possesses the local poset property.
\end{claim}
\begin{proof}
    Let $\alpha,\beta,\gamma$ satisfy the conditions of Definition~\ref{def:local-poset}.
    Importantly, $\widetilde{\alpha},\widetilde{\beta}\subseteq\widetilde{\gamma}$ and $\gamma \in \glang$ implies $|\widetilde{\alpha}\cup\widetilde{\beta}| \leq 6$.
    Then, under these conditions, we see that the intersection $\widetilde{\alpha}\cap\widetilde{\beta}$ and union $\widetilde{\alpha}\cup\widetilde{\beta}$ can always be permuted into another word in $\glang_A$ when $(\alpha,\beta) \in \glang_A\times\glang_A$ due to the construction of $\glang_A$.
    Likewise, the intersection and union can be permuted into a feasible word of $\glang_0$ whenever $(\alpha,\beta)\in\glang_0\times\glang_0$ since $\glang_0$ is a polymatroid greedoid.
    Finally, suppose $\alpha\in\glang_0$ while $\beta \in \glang_A$.
    Notice then that the letters from $\widetilde{\alpha}\setminus\widetilde{\beta}$ can be appended onto $\beta$ in any order to obtain a word in $\glang_A$ supported by $\widetilde{\alpha}\cup\widetilde{\beta}$.
    Likewise, one can observe $(\widetilde{\alpha}\cap\widetilde{\beta})!\cap\glang = \varnothing$ only if $Y \cap \widetilde{\alpha}\cap\widetilde{\beta} \neq \varnothing$.
    However, by the construction of $\univ$ and $\glang_0$ the letters in $Y$ are only feasible continuations (with respect to the greedoid $\glang_0$) of words $\alpha' \in \glang_0$ such that $\widetilde{\alpha}'$ either contains one of $A$, $B$, or $C$, or $|\alpha'| = 5$.
    If $|\alpha| = 6$, then it follows from $\gamma \in \glang$ and $\widetilde{\alpha}\subseteq\widetilde{\gamma}$ that $\widetilde{\alpha} = \widetilde{\gamma}$.
    In such a case, $\widetilde{\alpha}\cap\widetilde{\beta} = \widetilde{\beta}$, which clearly can be permuted into a word in $\glang$.
    So assume $|\alpha| < 6$, meaning that $\widetilde{\alpha}$ contains one of $A$, $B$, or $C$.
    Because $\widetilde{\beta}$ contains $A$, and $\gamma$ is a word of length at most 6 containing $\widetilde{\alpha}\cup\widetilde{\beta}$ in its support, we see $A\subseteq \widetilde{\alpha}$ by pigeonhole principle.
    Hence $\alpha \in \glang_A$, and we've reduced this case to one which we've already solved.
\end{proof}

\begin{claim}
    $\glang$ possesses optimism.
\end{claim}
\begin{proof}
    Select a basic word $\alpha \in \glang$ and letter $x \in\univ$, and by way of contradiction assume $x \notin\Gamma[\alpha]$ for any $\alpha$-prefix.
    The letters of $A\cup B\cup C$ are continuations of $\epsilon$, hence making $x \in Y$.
    Notice, if $\widetilde{\alpha}\cap Y \neq \varnothing$ then there is a letter in $\alpha$ which can be replaced by $x$ without changing the relative ordering of any other letters (thereby making $x$ a continuation of the corresponding prefix).
    So to maintain our assumptions, we see $\widetilde{\alpha}\cap Y =\varnothing$.
    Then let $\alpha'$ be the length 5 prefix of $\alpha$.
    For $\alpha'x \notin \glang$ it must be that $\alpha' \in \glang_0\setminus\glang_A$, hence we see $f'(\widetilde{\alpha}') = 5$.
    Notice that $Y \cap \widetilde{\alpha}' = \varnothing$ as well, so the number of edges in any spanning tree of the subgraph induced by $\widetilde{\alpha}'\cup Y$ is at least one greater than that induced by $\widetilde{\alpha}'$.
    This makes,
    \[
        f'(\widetilde{\alpha}'+ x) - f'(\widetilde{\alpha}') \geq 1,\quad\forall x \in Y.
    \]
    But, $f'(\widetilde{\alpha}'+ x) - f'(\widetilde{\alpha}') \leq 6 - 5 = 1$ by definition of $f'$, and so $\left(f'/\widetilde{\alpha}'\right)(x) = 1$ for all $x \in Y$.
    As a consequence $\alpha' x \in \glang_0$ for all $x \in Y$, a contradiction.
\end{proof}

\begin{proposition}
    There exists an optimistic local poset greedoid for which there is no aligned polymatroid representation.
\end{proposition}
\begin{proof}
    By the preceding claims, $\glang$ is an optimistic local poset greedoid.
    Towards a contradiction, let $\rho:2^\univ\to\mathbb{R}$ be any (possibly non-integral) aligned polymatroid representation of $\glang$.
    Observe $b_1b_2b_3y_1, b_1b_2b_3y_2, b_1b_2b_3y_3 \in \glang$ implies,
    $$\rho(B+ y_1) = \rho(B+y_2) = \rho(B+y_3) =4.$$
    Notice that the set of continuations for $b_1b_2b_3y_1$, $b_1b_2b_3y_2$, and $b_1b_2b_3y_3$ are the same, hence making $b_1b_2b_3y_1 \sim b_1b_2b_3y_2 \sim b_1b_2b_3y_3$.
    This implies $\rho(B\cup Y) = 4$.
    The same is true of $c_1c_2c_3y_1$, $c_1c_2c_3y_2$, and $c_1c_2c_3y_3$, hence $\rho(C\cup Y) = 4$ as well.
    It can be seen that $\rho(B\cup C\cup Y) = 6$, since $b_1b_2b_3c_1c_2c_3 \in \glang$ and $y_1,y_2,y_3 \in \kappa[b_1b_2b_3c_1c_2c_3]$ as $b_1b_2b_3c_1c_2c_3$ is basic and none of $y_1$, $y_2$, or $y_3$ are loops.
    Then because submodularity makes,
    \begin{equation*}
    \overbrace{\rho(B\cup Y) + \rho(C\cup Y)}^{4+4}\geq \underbrace{\rho(B\cup C\cup Y)}_{6} + \rho(Y),
    \end{equation*}
    it follows that $\rho(Y) \leq 2$.
    However, because $a_1a_2a_3y_1y_2y_3 \in \glang$, we see $(\rho/A)(Y) = 3$.
    In particular, this means that $(\rho/A)(Y) > (\rho/\varnothing)(Y) = 2$, and so $\rho$ does not satisfy the law of diminishing returns (a contradiction).
\end{proof}

\begin{remark}\label{rem:change}
    In \cite{korte1988intersection}, Korte and Lovász limit the length of the feasible words using a truncation operation.
    This would make basic words such that no prefix possesses all 3 letters from one of $A, B, C$, like $a_1b_1c_1a_2b_2c_2$ for example, such that there is no prefix possessing a letter from $Y$ as a feasible continuation.
    In contrast, we limit the length by operating on the function $f$ measuring the size of spanning trees in the induced subgraphs (effectively by intersecting the corresponding graphic matroid with a \emph{uniform matroid} of rank 6).
    Specifically, the function $f'(X) = \min\{f(X), 6\}$, which is a polymatroid representation of $\glang_0$, is such that the marginal return of a letter $x \in Y$ under such words of length 5 is always equal to one, hence making $x$ a continuation of such words in $\glang_0$ for preserving optimism.
\end{remark}

\section{Proof of Proposition~\ref{prop:representation->local-poset}}\label{app:deferred-lp}

We claimed that the argument given in \cite{korte1985polymatroid} for Proposition~\ref{prop:representation->local-poset} doesn't require integrality.
So, now we have to make good on that promise.
We follow parts of the original argument from \cite{korte1985polymatroid}.
This requires some definitions which are equivalent to the interval property and local poset property which are not used in any of our original proofs (see \cite{korte2012greedoids} for reference).

The interval property is equivalently described by following:
For all $X\subseteq Y \subseteq Z$ such that all of $X!\cap\glang$, $Y!\cap\glang$, and $Z!\cap \glang$ are nonempty and $x \notin Z$,
\begin{equation}\label{eq:alt-int}
        (X + x)!\cap \glang \neq \varnothing\text{ and }(Z + x)! \cap \glang\neq \varnothing \implies (Y+x)!\cap \glang \neq \varnothing.
\end{equation}
One can use this to prove the following via the law of diminishing returns.
\begin{proposition}[\cite{korte1985polymatroid}]\label{prop:int}
    Suppose $\glang$ is a greedoid with any polymatroid representation $\rho:2^\univ\to\mathbb{R}$, then it possesses the interval property. 
\end{proposition}
\begin{proof}
    The argument from \cite{korte1985polymatroid} uses only the fact that the marginal return of any continuation under a feasible word is always one.
    Specifically, let $X \subseteq Y \subseteq Z$ be such that $X!\cap\glang$, $Y!\cap\glang$, and $Z!\cap\glang$ are non-empty.
    Then, take any $x\notin Z$ satisfying the premise of Eq.~\ref{eq:alt-int}.
    From the law of diminishing returns,
    \[
    1 = (\rho/X)(x) \geq (\rho/Y)(x) \geq (\rho/Z)(x) = 1.
    \]
    Hence, for all $\alpha \in Y!\cap\glang$
    it follows that $(\rho/\widetilde{\alpha})(x) = 1$.
    Thus $x \in \Gamma[\alpha]$ by definition of representation, so the intersection on the right hand side of Eq.~\ref{eq:alt-int} non-empty since $\alpha x \in (Y +x)!\cap\glang$.
\end{proof}

We also need the following.

\begin{lemma}\label{lem:std}
    Suppose $\glang$ is a greedoid with any polymatroid representation $\rho:2^\univ\to\mathbb{R}$.
    Fix $X,Y\subseteq\univ$, and suppose $X\subseteq Y$.
    Then, if $Y!\cap\glang\neq\varnothing$, it follows that $\rho(X) \geq |X|$, with equality if and only if $X!\cap\glang\neq\varnothing$.
\end{lemma}
\begin{proof}
    Induct over $|X|$.
    Clearly this is true when $X=\varnothing$.
    Now, let $\alpha \in Y^*\cap\glang$ be a maximal length word satisfying $X\not\subseteq \widetilde{\alpha}$.
    Then, by the exchange property there exists $z \in Y\setminus \widetilde{\alpha}$ such that $\alpha z \in\glang$.
    Observe, because $\alpha$ is a maximal word satisfying $X\not\subseteq\widetilde{\alpha}$, we have $X\subseteq\widetilde{\alpha z}$.
    This makes all of $z \in X$,$X - z \subseteq\widetilde{\alpha}$, and
    $X\cup\widetilde{\alpha} = \widetilde{\alpha z}$ true.
    Because $\rho$ is a representation, we also have all of $\rho(X\cup\widetilde{\alpha}) = |\alpha| + 1$, $\rho(X\cup\widetilde{\alpha}) = |X\cup\widetilde{\alpha}|$, and $\rho(\widetilde{\alpha}) = |\alpha|$ in particular.

    Our construction also makes $X\cap\widetilde{\alpha} \subset X$, and so $\rho(X\cap\widetilde{\alpha}) \geq |X\cap\widetilde{\alpha}|$ by inductive hypothesis.
    Then with submodularity,
    \begin{equation}\label{eq:uiuitsi}
        \rho(X) \geq \rho(X\cap\widetilde{\alpha}) + \rho(X\cup\widetilde{\alpha}) - \rho(\widetilde{\alpha}) \geq |X\cap\widetilde{\alpha}| + |X\cup\widetilde{\alpha}| - |\widetilde{\alpha}| = |X|.
    \end{equation}
    This shows the first part of the result.
    Now for the second, first assume $\rho(X) = |X|$.
    Because $(\rho/\widetilde{\alpha})(z) = 1$ and $X-z\subseteq\widetilde{\alpha}$, it follows by the law of diminishing returns that $(\rho/X-z)(z)\geq 1$.
    Combining this with our inductive hypothesis,
    \[
        |X| - 1 \leq \rho(X-z) \leq \rho(X) - 1 = |X| - 1.
    \]
    So $\rho(X-z) = |X| -1$, which makes $(X-z)!\cap\glang\neq\varnothing$ by our hypothesis.
    Therefore, since $(\rho/X-z)(z) = 1$, it follows from the definition of representation that $X!\cap\glang\neq\varnothing$ as well.
    Now suppose $X!\cap\glang\neq\varnothing$.
    Then $\rho(X) = |X|$ by assuming $\rho$ is a representation.
    Hence the lemma.
\end{proof}

Now, a greedoid is a local poset greedoid if and only if it is an interval greedoid with the \emph{local intersection property}.
That is, for all $X\subseteq \univ$ and $y,z\in X$,
\[
    X!\cap\glang\neq\varnothing\text{ and }(X-y)!\cap\glang\neq\varnothing\text{ and }(X-z)!\cap\glang\neq\varnothing \implies (X\setminus\{y,z\})!\cap\glang\neq \varnothing.
\]
See Theorem 2.2 in Ch. 7 of \cite{korte2012greedoids}, for example.
So, in light of Proposition~\ref{prop:int} we need only verify the above.

\begin{proof}[Proof of Proposition~\ref{prop:representation->local-poset}]
    Let $\rho:2^\univ\to\mathbb{R}$ be any (possibly non-integral or non-aligned) representation, and select $X\subseteq \univ$ and $y,z\in\univ$ with $X!\cap\glang\neq\varnothing$, $(X-y)!\cap\glang\neq\varnothing$, and $(X-z)!\cap\glang\neq\varnothing$.
    By Lemma~\ref{lem:std} and submodularity this construction makes,
    \[
        |X| - 2 \leq \rho(X\setminus\{y,z\}) \leq \rho(X-y) + \rho(X-z) - \rho(X) = |X| - 2.
    \]
    But then, $|X| - 2 = \rho(X\setminus\{y,z\})$, so $(X\setminus\{y,z\})!\cap\glang \neq \varnothing$ once again by Lemma~\ref{lem:std}.
\end{proof}

\section{Greediness is Closed Under Contraction Minors}\label{app:greedy-minors}

First, define the \emph{strong exchange property}:
For all $\alpha \in \glang$ and basic words $\beta \in \glang$ with $\widetilde{\alpha}\subseteq\widetilde{\beta}$, if $x \notin\widetilde{\beta}$ is such that $x \in \Gamma[\alpha]$, then there exists $y\in\widetilde{\beta}\setminus\widetilde{\alpha}$ such that $y \in \Gamma[\alpha]$ and $(\widetilde{\beta} - y + x)!\cap\glang \neq \varnothing$.
In \cite{korte1984greedoids,goetschel1986linear}, it was shown that this is necessary and sufficient for correctness of the greedy algorithm on linear optimization over basic words.
We prove the following:
\begin{proposition}
    Let $\glang$ be an interval greedoid and $F\in\glang/\mathord{\sim}$.
    If the greedy algorithm correctly solves any linear optimization over the basic words of $\glang$, then it correctly solves any linear over the basic words of $\glang/F$ as well.
\end{proposition}
\begin{proof}
    Firstly, examine $\glang/F$, and select any $\alpha\in\glang/F$, basic $\beta\in\glang/F$, and $x\notin\widetilde{\beta}$ which is a continuation of $\alpha$ in $\glang/F$, as in the statement of the strong exchange property.
    Well, for any $\gamma\in F$ we have $\gamma\alpha$ and $\gamma\beta$ are feasible.
    Furthermore, we also have $x\in\Gamma[\gamma \alpha]$.
    This means $x$ is not in the support of $\gamma$, so $x \notin \widetilde{\gamma\beta}$.
    Then, there exists $y\in\widetilde{\gamma\beta}\setminus\widetilde{\gamma\alpha}$ such that $\gamma \alpha y\in\glang$ and $\widetilde{\gamma\beta} - y + x$ can be permuted into a $\glang$-feasible word.
    This makes $y$ a continuation of $\alpha$ in $\glang/F$.
    But also, our construction ensures $y \notin\gamma$.
    So, by the exchange property there is some permutation $\pi \in (\widetilde{\beta} - y + x)!$ such that $\gamma\pi \in \glang$.
    Hence $\pi$ is basic in $\glang/F$, and the strong exchange property holds.
\end{proof}

\end{document}